\documentclass[11pt]{amsart}
\usepackage[utf8]{inputenc}

\usepackage{amsmath}
\usepackage{amssymb}
\usepackage{amstext}
\usepackage{amsthm}
\usepackage{tikz}
\usetikzlibrary{calc}
\usepackage{scalefnt}
\usepackage{algorithmic}
\usepackage{algorithm}
\usepackage{comment}

\topmargin      -0.7cm                  
\headheight      0.7cm
\headsep         1.0cm
\topskip         0.0cm
\textheight     22.6cm

\evensidemargin  -0.5cm
\oddsidemargin   1.4cm
\textwidth      15.0cm
\parindent       0.0cm                  
\parskip        5pt plus2pt minus1pt    
\frenchspacing                          

\addtolength{\topsep}{-0.9ex}
\addtolength{\itemsep}{-0.9ex}

\makeatletter

\newcommand{\N}{\ensuremath{\mathbb{N}}}
\newcommand{\R}{\ensuremath{\mathbb{R}}}
\newcommand{\Prob}{\ensuremath{\mathbb{P}}}

\newcommand{\calC}{\mathcal{C}}
\newcommand{\calD}{\mathcal{D}}
\newcommand{\calF}{\mathcal{F}}
\newcommand{\calH}{\mathcal{H}}
\newcommand{\calI}{\mathcal{I}}
\newcommand{\calJ}{\mathcal{J}}
\newcommand{\calK}{\mathcal{K}}
\newcommand{\calM}{\mathcal{M}}

\newcommand{\calO}{\mathcal{O}}
\newcommand{\calP}{\mathcal{P}}

\newcommand{\calU}{\mathcal{U}}
\newcommand{\calR}{\mathcal{R}}
\newcommand{\calS}{\mathcal{S}}
\newcommand{\calX}{\mathcal{X}}

\newcommand{\E}{\mathbb{E}}

\newcommand{\op}[1]{\operatorname{#1}}
\newcommand{\norm}[2][\relax]{\ifx#1\relax \ensuremath{\left\Vert#2\right\Vert}\else \ensuremath{\left\Vert#2\right\Vert_{#1}}\fi}
\newcommand{\BIGOP}[1]{\mathop{\mathchoice
{\raise-0.22em\hbox{\huge $#1$}}
{\raise-0.05em\hbox{\Large $#1$}}{\hbox{\large $#1$}}{#1}}}
\newcommand{\bigtimes}{\BIGOP{\times}}

\newcommand{\dd}{\mathrm{d}}
\newcommand{\bs}[1]{\boldsymbol{#1}}
\newcommand{\isdef}{\mathrel{\mathrel{\mathop:}=}}

\newcommand{\de}{\dd}
\newcommand{\balpha}{{\boldsymbol\alpha}}
\newcommand{\bgamma}{{\boldsymbol\gamma}}
\newcounter{thm}

\theoremstyle{plain}
\newtheorem{theorem}[thm]{Theorem}
\newtheorem{lemma}[thm]{Lemma}

\theoremstyle{definition}
\newtheorem{remark}[thm]{Remark}

\newtheorem{definition}[thm]{Definition}

\begin{document}

\title{Multilevel tensor approximation of PDEs with random data}

\author{Jonas Ballani}
\author{Daniel Kressner}
\address{Jonas Ballani, Daniel Kressner,
MATHICSE-ANCHP, \'Ecole Polytechnique F\'ed\'erale de Lausanne, Station 8, CH-1015 Lausanne.\\
}
\email{\{jonas.ballani,daniel.kressner\}@epfl.ch}
\author{Michael Peters}
\address{
Michael Peters,
Department of Mathematics and Computer Science, University of Basel, Spiegelgasse 1, CH-4051 Basel\\
}
\email{michael.peters@unibas.ch}
\thanks{The first author has been supported by an EPFL fellowship through the European Union's Seventh Framework Programme under grant agreement no.~291771.}

\maketitle

\begin{abstract}
In this paper, we introduce and analyze a new low-rank multilevel strategy for the solution of random diffusion problems.
Using a standard stochastic collocation scheme, we first approximate the infinite dimensional random problem
by a deterministic parameter-dependent problem on a high-dimensional parameter domain. Given a hierarchy of finite element discretizations
for the spatial approximation, we make use of a multilevel framework in which we consider the differences of the solution
on two consecutive finite element levels in the collocation points. We then address the approximation of these 
high-dimensional differences by adaptive low-rank tensor techniques. This allows to equilibrate the error on all levels
by exploiting analytic and algebraic properties of the solution at the same time. We arrive at 
an explicit representation in a low-rank tensor format of the approximate solution on the entire parameter domain, which can be used for, e.g., the direct
and cheap computation of statistics. Numerical results are provided in order to illustrate the approach.
\end{abstract}

\section{Introduction}
In this article, we consider the random boundary value problem
\begin{equation}\label{eq:theproblem}
    -\nabla\cdot\big(a(\omega)\nabla u(\omega)\big) = f\text{ in } D,\quad u(\omega)=0\text{ on } \partial D,
\end{equation}
where \(D\subset\mathbb{R}^d\) denotes a domain and \(\omega\in\Omega\) is a random parameter, with \(\Omega\) denoting the set of possible outcomes. As the solution \(u\) 
depends on the parameter $\omega$, we aim at an
efficient approximation of the solution map $\omega \mapsto u(\omega)$. The numerical solution of~\eqref{eq:theproblem} has attracted quite some attention during the last decade, motivated by the need for quantifying the impact of uncertainties in PDE-based models.

The key idea of our novel approach is to combine a multilevel stochastic collocation framework with adaptive low-rank tensor techniques. This involves the following steps:
\begin{enumerate}
\item A standard technique for random diffusion problems, the Karhunen-Lo\`eve expansion 
of the diffusion coefficient $a$ is truncated after $N\in\N$ terms to turn~\eqref{eq:theproblem} into a parametric PDE depending on $N$ random parameters. This truncated problem is approximated by a stochastic collocation scheme.
\item We use a hierarchy of finite element discretizations for discretizing the physical domain $D$ and represent the solution $u$ as a telescoping sum.
The smoothness properties of the solution $u$ are exploited to adapt the polynomial degrees in the stochastic collocation 
of the differences of $u$ between two consecutive finite element levels.
This allows us to choose a low polynomial degree for the fine spatial discretization while using higher polynomial
degrees only on coarser finite element levels. 
\item Because of the high dimensionality of the parameter domain, each difference in the multilevel sum needs to be evaluated
in a large number of collocation points. We use adaptive low-rank tensor techniques to obtain good approximations from a relatively small number of samples. This allows us to exploit the algebraic structure of the solution with respect to the random parameters automatically
while maintaining the accuracy of the multilevel scheme.
\end{enumerate}

Both, multilevel and low-rank tensor approximation techniques, have been extensively studied for the solution of~\eqref{eq:theproblem}.
In the following, we briefly describe some of the existing approaches.

A number of different multilevel techniques have been proposed that aim at equilibrating the errors of the spatial approximation and the approximation in the random parameter. 
If a statistics of the solution or a quantity of interest needs to be computed, multilevel quadrature methods,
like the multilevel (quasi-)Monte Carlo method or even more general quadrature approaches, are 
feasible; we refer to~\cite{BSZ,G,GW,GriHarPet2015,HPS1,H,H2} for instances of this approach.
Closer to the setting considered in this paper, the work by Teckentrup et al.~\cite{TJWG} proposes to directly interpolate the solution \(u\) itself
in suitable collocation points in the parameter domain from a sparse index set. Given additional smoothness in the spatial variable, a spatial sparse-grid approximation can be incorporated, which leads to the multiindex stochastic collocation proposed in~\cite{ANTT}.

Low-rank tensor approximation techniques have turned out be a versatile tool for solving PDEs with random data; see~\cite{Nouy2015,Nouy2015b} and the references therein. In particular, a variety of low-rank approaches have been proposed to address the linear systems arising from a Galerkin discretization of~\eqref{eq:theproblem}; see, e.g.,~\cite{DolKLM2015,DooI2009,EigPS2015,EspHackLitMatWae2012,EspHackLitMatZan2013,KhoOsel2010,KhoSchwa2011,KresTob2011,MatZan2012,Tob2012}. Non-intrusive tensor-based approaches for uncertainty quantification can be built upon black box approximation techniques~\cite{BaGr2014,BaKr2016,Oseledets2010,Savostyanov2011}.

To the best of our knowledge, there is little work on merging multilevel and tensor approximation techniques in uncertainty quantification. Recently, Lee and Elman~\cite{LeeE2016} proposed a two-level scheme in the context of the Galerkin method for PDEs with random data. This scheme uses the solution from the coarse level to identify a dominant subspace in the domain of the random parameter, which in turn is used to speed up the solution on the fine level by avoiding costly low-rank truncations. The combination of multilevel and tensor approximation techniques proposed in this paper is conceptually different and is not 
restricted to this two level approach but allows for multiple levels.

The rest of this paper is organized as follows. In Section~\ref{sec:problem}, we formulate the mathematical setting and recall the Karhunen-Lo\`eve expansion. 
Section~\ref{sec:discretization} is concerned with the discretization of~\eqref{eq:theproblem} in the physical and the stochastic domain. In Section~\ref{sec:multilevel}, we describe an existing multilevel scheme and analyze the impact of perturbations on this scheme. Section~\ref{sec:lowrank} contains the main contribution of this paper, a novel combination of the multilevel scheme with a low-rank tensor approximation. Finally, Section~\ref{sec:experiments} reports numerical results for PDEs with a random diffusion coefficient on the unit square featuring a variety of different stochastic diffusion coefficients.

Throughout this article, in order to avoid the repeated use of generic but unspecified constants, we indicate by \(C\lesssim D\) that \(C\) can be bounded by a multiple of \(D\), independently of parameters which \(C\) and \(D\) may depend on. Obviously, \(C\gtrsim D\) is defined as \(D\lesssim C\), and we write \(C\eqsim D\) if there holds 
\(C\lesssim D\) and \(C\gtrsim D\).

\section{Problem Setting} \label{sec:problem}
Let $D\subset \R^d$ denote a bounded Lipschitz domain. Typically, we have \(d=2,3\).
Moreover, let $(\Omega,\calF,\Prob)$ be a complete and separable probability space, where $\Omega$ is the set of 
outcomes, $\calF\subset 2^\Omega$ is the $\sigma$-algebra of events, and $\Prob\colon\calF\to[0,1]$ is a probability 
measure on \(\calF\).
We are interested in solving the following stochastic diffusion problem: Find $u\in L^2\big(\Omega;H^1_0(D)\big)$ such that
\begin{equation*}
\begin{aligned}
    -\nabla\cdot\big(a(\omega)\nabla u(\omega)\big) &= f,&&\text{in } D,\\
    u(\omega) &= 0,&&\text{on }\partial D,
\end{aligned}
\end{equation*}
holds \(\Prob\)-almost surely. 
Here and in the sequel, for a
Banach space  \(\mathcal{X}\), we define the \emph{Lebesgue-Bochner-space} \(L^p(\Omega;\mathcal{X})\), $1\le p\le\infty$
as the space of all equivalence classes of
strongly measurable functions $v\colon\Omega\to \mathcal{X}$ whose norm
\[
  \|v\|_{L^p(\Omega;\mathcal{X})} \isdef 
        \begin{cases} \displaystyle{\left(\int_\Omega \|v(\omega)\|_{\mathcal{X}}^p 
                \de\mathbb{P}(\omega)\right)^{1/p}},& p<\infty\\[2ex]
        \displaystyle{\operatorname*{ess\,sup}_{\omega\in\Omega}\|v(\omega)\|_{\mathcal{X}}},& p=\infty\end{cases}
\]
is finite. If $p=2$ and $\mathcal{X}$ is a separable Hilbert 
space, then the Bochner space is isomorphic to the tensor 
product space 
\[
L^2(\Omega;\calX)\cong L^2(\Omega)\otimes\calX.
\]

Throughout this article, we shall assume that the load \(f\in L^2(D)\) is purely deterministic.
Still, by straightforward modifications it is also possible to deal with random load vectors, see, e.g.,~\cite{BabNobTem2010}. 
Additionally, for the sake of simplicity, we restrict ourselves here to the case of uniformly elliptic diffusion problems. This means
that we assume the existence of constants $a_\mathrm{min} > 0$ and $a_\mathrm{max} < \infty$ that are independent
of the parameter \(\omega\in\Omega\) such that for almost every \(x\in D\) there holds
\begin{equation}\label{eq:unifcond}
a_\mathrm{min} \leq a(\omega,x) \leq a_\mathrm{max}\quad\Prob\text{-almost surely}.
\end{equation}
Nevertheless, we emphasize that the presented approach is directly transferable to diffusion problems, where the 
constants $a_\mathrm{min}$ and $a_\mathrm{max}$ might depend on \(\omega\in\Omega\) and are only \(\Prob\)-integrable,
as it is the case for log-normally distributed diffusion coefficients, cf.~\cite{HS11,SG}.
Therefore, all results presented here remain valid in this case.

Typically, the diffusion coefficient is not directly feasible for numerical computations and has thus to be represented in
a suitable way.
To that end, one decomposes the diffusion coefficient
with the aid of the \emph{Karhunen-Lo\`eve expansion}. 

Let
the covariance kernel of \(a(\omega,x)\) be defined by the positive semi-definite function
\[
\mathcal{C}(x,x')\isdef\int_\Omega \big(a(\omega,x)-\E[a]({x})\big)\big(a(\omega,{x}')-\E[a]({x}')\big)
\de\mathbb{P}.
\]
Herein, the integral with respect to \(\Omega\) has to be understood in terms of a Bochner integral, cf.~\cite{HP57}.
One can show that \(\mathcal{C}(x,x')\) is well defined if there holds \(a\in L^2(\Omega;\mathcal{X})\).
Now, let \(\{(\lambda_n,\varphi_n)\}_n\) denote the eigenpairs obtained by solving the
eigenproblem for the diffusion coefficient's covariance, i.e.~
\[
\int_D\mathcal{C}({x},{x}')\varphi_n({x}')\de{x}'=\lambda_n\varphi_n({x}).
\]
Then, the Karhunen-Lo\`eve expansion of \(a(\omega,x)\) is given by
\begin{equation}\label{eq:KLE}
a(\omega,x)
     =\E[a]({x})+\sum_{n=1}^\infty\sqrt{\lambda_n}X_{n}(\omega)\varphi_n({x}),
\end{equation}
where \(X_n\colon\Omega\to\Gamma_n\subset\mathbb{R}\) for \(n=1,2,\ldots\) are centered, pairwise uncorrelated and
\(L^2\)-normalized
random variables given by
\[
X_n\isdef\frac{1}{\sqrt{\lambda_n}}\int_D \big(a(\omega,x)-\E[a](x)\big)\varphi_n(x)\de x.
\] 
From condition \eqref{eq:unifcond}, we directly infer, that the image of the random variables is a bounded set and
that \(\E[a]({\bf x})>0\). Thus, without loss of generality, we assume that \(\Gamma_n=[-1,1]\).
The important cases, which we wish to study here, are the uniformly distributed case, i.e.~\(X_n\sim\mathcal{U}([-1,1])\)
and the log-uniformly distributed case which means that we have diffusion coefficient of the form
\(\exp\big(a(\omega,x)\big)\), where \(a(\omega,x)\) is given as in the uniformly distributed case and
satisfies \eqref{eq:unifcond}.

Although, we have separated by now the spatial and the stochastic influences in the diffusion coefficient, 
we are still facing an infinite sum. Nevertheless, for numerical issues, this sum may be truncated appropriately.
The impact of truncating the Karhunen-Lo\`eve expansion on the solution
is bounded by
\[
\|u-u_{N}\|_{L^2(\Omega;H^1_0(D))}\lesssim \|a-a_{N}\|_{L^2(\Omega;L^\infty(D))}=\varepsilon(N),
\]
where \(\varepsilon(N)\to 0\) montonically as \(N\to\infty\), see e.g.~\cite{Cha12,ST06}. 
Herein,
we set
\[
a_N(\omega,x)
     \isdef\E[a]({x})+\sum_{n=1}^N\sqrt{\lambda_n}X_{n}(\omega)\varphi_n({x}),
\]
and \(u_N\) is the solution to
\[
-\nabla\cdot\big(a_N(\omega)\nabla u_N(\omega)\big) = f\text{ in } D,\quad u(\omega) = 0\text{ on }\partial D.
\]
Note that these estimates relate to the log-normal and the uniformly distributed cases. But they also directly transfer
to the log-uniform case.

Assuming additionally, that the \(\{X_n\}_n\) are independent and exhibit densities 
\(\rho_n\colon\Gamma_N\to\mathbb{R}_{+}\) with respect to
the Lebesgue measure,
we end up, with the parametric diffusion problem: Find \(u_N\in L^2_\rho\big(\Gamma;H^1_0(D)\big)\)
\begin{equation}\label{eq:paramEQ}
-\nabla\cdot\big(a_N(\bs{y})\nabla u_N(\bs{y})\big) = f\text{ in } D,
\end{equation}
where \(\rho\isdef\rho_1(y_1)\cdots\rho_N(y_N)\), \(\Gamma\isdef\bigtimes_{n=1}^N \Gamma_n\)
and \(\bs{y}=\bs{y}(\omega)\isdef [y_1(\omega),\ldots,y_N(\omega)] \in \Gamma\).
Herein, the space \(L^2_\rho\big(\Gamma;H^1_0(D)\big)\) is endowed with the norm
\[
\norm{v}_{L^2_\rho(\Gamma;H^1_0(D))}\isdef \left( \int_{\Gamma} \norm{v(\bs{y})}_{H^1_0(D)}^2 \rho(\bs{y}) \dd \bs{y} \right)^{1/2}.
\]
Note that we have \(\rho_n=1/2\) for the case of \(X_n\sim\mathcal{U}([-1,1])\).
In view of the polynomial interpolation with respect to the parameter \(\bs{y}\in\Gamma\), we shall finally introduce for a
Banach space \(\mathcal{X}\) the
space
\[
C^0(\Gamma;\mathcal{X})\isdef\Big\{v\colon\Gamma\to\mathcal{X}: v\text{ is continuous and }\sup_{\bs{y}\in\Gamma}\|v(\bs{y})\|_{\mathcal{X}}<\infty\Big\}.
\]

\section{Discretization} \label{sec:discretization}

Later on, a standard \emph{stochastic collocation} scheme, cf.~\cite{BabNobTem2010}, is used for the stochastic discretization of
the differences of the solutions to the parametric diffusion problem~\eqref{eq:paramEQ} on consecutive grids. 
To that end, we use tensor product polynomial interpolation
in the parameter space $\Gamma$ and a finite element approximation in the physical domain $D$.

\subsection{Polynomial Interpolation}
Let $\calP_{\bs{p}}(\Gamma) \subset L_\rho^2(\Gamma)$ denote the span of tensor product polynomials with degree at most $\bs{p}=(p_1,\ldots,p_N)$, i.e.,
\[
    \calP_{\bs{p}}(\Gamma) = \bigotimes_{n=1}^N \calP_{p_n}(\Gamma_n)
\]
with
\[
    \calP_{p_n}(\Gamma_n) = \operatorname{span}\{ y_n^m : m=0,\ldots,p_n\},\quad n=1,\ldots,N.  
\]
Given interpolation points $y_{n,k_n}\in \Gamma_n$, $k_n=0,\ldots,p_n$, the Lagrange basis for $\calP_{p_n}(\Gamma_n)$ is defined by $\{l_{n,k_n}\in\calP_{p_n}(\Gamma_n):l_{n,k_n}(y_{n,j_n}) = \delta_{k_n,j_n}, k_n =0,\ldots,p_n \}$.
By a tensor product construction, we obtain the Lagrange basis $\{l_{\bs{k}}\}$ for \(\calP_{\bs{p}}(\Gamma)\) where
\[
    l_{\bs{k}}(\bs{y}) \isdef \prod_{n=1}^N l_{n,k_n}(y_n)
\]
for a multiindex $\bs{k}=(k_1,\ldots,k_N)\in\calK_{\bs{p}}$
with 
\[
    \calK_{\bs{p}}\isdef\{(k_1,\ldots,k_N)\in \N_0^N : k_n = 0,\ldots,p_n,\, n=1,\ldots,N\}. 
\]

For all functions \(v\in C^0\big(\Gamma;H^1_0(D)\big)\), the tensor product interpolation points\\ $\bs{y}_{\bs{k}} \isdef (y_{1,k_1},\ldots,y_{N,k_N}) \in \Gamma$ give rise to the interpolation operator
\[
    \calI_{\bs{p}}\colon C^0\big(\Gamma; H_0^1(D)\big) \to \calP_{\bs{p}}(\Gamma) \otimes H_0^1(D)
\]
defined by
\begin{equation}\label{eq:ipol_def}
    \calI_{\bs{p}}[v](\bs{y}) = \sum_{\bs{k}\in\calK_{\bs{p}}} v(\bs{y}_{\bs{k}}) l_{\bs{k}}(\bs{y}).
\end{equation}

With regard to \eqref{eq:paramEQ}, our goal is to approximate the solution $u_N$ by
\[
u_N(\bs{y}) \approx \calI_{\bs{p}}[u_N](\bs{y}) = \sum_{\bs{k}\in\calK_{\bs{p}}} u_N(\bs{y}_{\bs{k}}) l_{\bs{k}}(\bs{y}).
\]
In order to obtain the coefficients $u_N(\bs{y}_{\bs{k}})$, we have to solve 
\begin{equation}\label{eq:CollcoeffEQ}
-\nabla\cdot\big(a_N(\bs{y}_{\bs{k}})\nabla u_N(\bs{y}_{\bs{k}})\big) = f\text{ in } D,\quad u_N(\bs{y}_{\bs{k}})=0\text{ on }\partial D,
\end{equation}
for all $\bs{y}_{\bs{k}}$ with $\bs{k}\in\calK_{\bs{p}}$. For each $\bs{k}\in\calK_{\bs{p}}$, \eqref{eq:CollcoeffEQ} is a deterministic diffusion problem on $D$ which can be approximated by the finite element method.

\subsection{Interpolation Error}


To study the impact of the interpolation error, we have to take the smoothness of \(u_N\) with respect to the parameter \(\bs{y}\in\Gamma\) into account.
It is well known, see, e.g.,~\cite{CDS,HS11}, that \(u_N\) satisfies the decay estimate
\begin{equation}\label{eq:decEst1}
\big\|\partial^{\balpha}_{\bs{y}}u_N(\bs{y})\big\|_{H^1_0(D)}\leq C|\balpha|!c^{|\balpha|}\bgamma^\balpha\|f\|_{L^2(D)},\text{ where }\gamma_n\isdef\sqrt{\lambda_n}\|\varphi\|_{L^\infty(D)},
\end{equation}
cf.~\eqref{eq:KLE},
for some constants \(C,c>0\). In the sequel, we consider the interpolation based on the Chebyshev nodes
\[
\eta_k\isdef \cos\bigg(\frac{2k+1}{2(p+1)}\pi\bigg)\in[-1,1],\quad k=0,\ldots,p.
\]
The related uni-directional interpolation operator shall be denoted by 
\[\mathcal{I}_p\colon C([-1,1])\to\mathcal{P}_p,\quad v(x)\mapsto \sum_{k=0}^pv(\eta_k) l_k(x).\]
It satisfies for a function \(v\in C^{p+1}([-1,1])\) the well known interpolation error estimate
\[
\bigg|v(x)-\sum_{k=0}^p v(\eta_k)l_k(x)\bigg|\leq\frac{1}{2^{p}(p+1)!}\max_{\xi\in[-1,1]}\big|v^{(p+1)}(\xi)\big|
\]
and the stability estimate
\[
\bigg\|\sum_{k=0}^p v(\eta_k)l_k(x)\bigg\|_{C^0([-1,1])}\leq\bigg(\frac{2}{\pi}\log(p+1)+1\bigg)\|v\|_{C^0([-1,1])}, 
\]
see, e.g.,~\cite{Riv74}. Therefore, we obtain by tensor product construction the stability estimate for \(\calI_{\bs{p}}\)
according to
\[
\big\|\calI_{\bs{p}}[v]\big\|_{C^0(\Gamma; H_0^1(D))}\leq C_s(\bs{p})\|v\|_{C^0(\Gamma; H_0^1(D))}
\]
with
\[
C_s(\bs{p})\isdef\prod_{i=1}^N\bigg(\frac{2}{\pi}\log(p_i+1)+1\bigg).
\]
Obviously, the stability constant will grow exponentially as \(N\to\infty\). Nevertheless, this case is not considered here.
Moreover, we emphasize that there exist regimes, where the stability constant is bounded. If the error is, for example,
measured in \(L^2_\rho(\Gamma; H_0^1(D))\) and the interpolation points are chosen as the roots of the orthogonal polynomials
with respect to the densities \(\rho_n\), then the corresponding stability estimate holds with \(C_s(\bs{p})=1\), cf.~\cite{BabNobTem2010}.
Still, without the orthogonality property, there exist also bounds of the stability constant for Chebyshev points, 
if the error is measured in \(L^1(\Gamma; H_0^1(D))\), see~\cite{Fej33}.
Nevertheless, in order to obtain a black box interpolation, which is independent of the particular density function, 
we will employ here the Chebyshev points and measure the error with respect to \(C^0\big(\Gamma; H_0^1(D)\big)\) at the cost
of a stability constant that is not robust with respect to the polynomial degree.

Thus, we obtain the following interpolation result for the solution \(u_N\) to \eqref{eq:paramEQ}, which is a straightforward modification
of the related result in \cite{HB02}.

\begin{theorem}\label{thm:interpErr}
Let \(c\gamma_k<2\). Then, given that
\[
p_k=\bigg\lceil\frac{\log(\varepsilon)}{\log(c\gamma_k/2)}\bigg\rceil -1,
\]
the polynomial interpolation satisfies the error estimate
\[
\big\|u_N({\bs{y}})-\calI_{\bs{p}}[u_N](\bs{y})\big\|_{H^1_0(D)}\lesssim \varepsilon C(\bs{p})\|f\|_{L^2(D)}
\]
for some constant $C(\bs{p})$.
\end{theorem}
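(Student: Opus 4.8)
The plan is to reduce the $N$-variate interpolation error to a sum of univariate errors via a telescoping identity, and then to exploit the exact cancellation between the factorial \emph{growth} in the derivative bound~\eqref{eq:decEst1} and the factorial \emph{decay} in the univariate Chebyshev error estimate. Write $\calI_{p_n}^{(n)}$ for the univariate operator $\calI_{p_n}$ acting on the $n$-th parameter only, so that $\calI_{\bs{p}}=\calI_{p_1}^{(1)}\cdots\calI_{p_N}^{(N)}$, these factors commuting since they act on distinct variables. The first step is the telescoping identity
\[
\op{Id}-\calI_{\bs{p}}=\sum_{n=1}^N\Big(\prod_{j=1}^{n-1}\calI_{p_j}^{(j)}\Big)\big(\op{Id}-\calI_{p_n}^{(n)}\big),
\]
proved by a one-line induction on $N$. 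It reduces the global error to $N$ contributions, the $n$-th of which measures the univariate error in direction $n$ after full interpolation in the directions $1,\ldots,n-1$.

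The second step bounds each summand separately. Using the univariate stability estimate in each of the directions $j<n$ (each contributing a Lebesgue factor $\frac{2}{\pi}\log(p_j+1)+1$) and then the univariate Chebyshev error estimate in direction $n$, I obtain
\[
\Big\|\Big(\prod_{j<n}\calI_{p_j}^{(j)}\Big)\big(\op{Id}-\calI_{p_n}^{(n)}\big)u_N\Big\|_{C^0(\Gamma;H_0^1(D))}\leq\Big(\prod_{j<n}\tfrac{2}{\pi}\log(p_j+1)+1\Big)\frac{1}{2^{p_n}(p_n+1)!}\sup_{\bs{y}\in\Gamma}\big\|\partial_{y_n}^{p_n+1}u_N(\bs{y})\big\|_{H_0^1(D)}.
\]
Now I invoke~\eqref{eq:decEst1} for the pure multi-index $\balpha=(p_n+1)\vec{e}_n$, for which $|\balpha|!=(p_n+1)!$ and $\bgamma^{\balpha}=\gamma_n^{p_n+1}$. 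The factors $(p_n+1)!$ cancel exactly, and what remains for the univariate factor is the purely geometric bound $2C\,(c\gamma_n/2)^{p_n+1}\|f\|_{L^2(D)}$.

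The third step fixes the degrees. The hypothesis $c\gamma_n<2$ gives $q_n\isdef c\gamma_n/2\in(0,1)$, and since $\log q_n<0$ the choice $p_n+1=\lceil\log(\varepsilon)/\log(q_n)\rceil\geq\log(\varepsilon)/\log(q_n)$ yields $q_n^{p_n+1}\leq\varepsilon$. Hence each of the $N$ summands is bounded by $2C\varepsilon\|f\|_{L^2(D)}$ times a product of Lebesgue constants, and summing over $n$ gives the asserted estimate with $C(\bs{p})\lesssim\sum_{n=1}^N\prod_{j<n}\big(\tfrac{2}{\pi}\log(p_j+1)+1\big)\leq N\,C_s(\bs{p})$.

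The routine ingredients are the induction for the telescoping identity and the fact that the scalar univariate estimates carry over to $H_0^1(D)$-valued functions (e.g.\ by testing against functionals in the dual of $H_0^1(D)$, or by reading the pointwise-in-$D$ estimate in the $H_0^1(D)$-norm). The genuine crux—and the only place where analyticity is really used—is the factorial cancellation in the second step: it is exactly the $|\balpha|!$ in~\eqref{eq:decEst1} matching the $(p_n+1)!$ in the denominator of the univariate error that converts an a priori useless bound into a convergent geometric one. Note that only pure single-direction derivatives enter, so the full mixed-derivative strength of~\eqref{eq:decEst1} is not exploited here.
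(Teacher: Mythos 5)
Your proposal is correct and follows essentially the same route as the paper's proof: the same telescoping decomposition $\op{Id}-\calI_{\bs{p}}=\sum_{n}\big(\prod_{j<n}\calI_{p_j}\big)(\op{Id}-\calI_{p_n})$, the same combination of univariate Lebesgue-constant stability in the already-interpolated directions with the Chebyshev error estimate in direction $n$, the same exact cancellation of $(p_n+1)!$ against $|\balpha|!$ from \eqref{eq:decEst1} with $\balpha=(p_n+1)\vec{e}_n$, and the same choice of degrees yielding $(c\gamma_n/2)^{p_n+1}\leq\varepsilon$. Your explicit bound $C(\bs{p})\lesssim N C_s(\bs{p})$ matches the paper's subsequent remark, and your added care about transferring the scalar univariate estimates to $H_0^1(D)$-valued functions is a point the paper leaves implicit.
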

\begin{proof}
There holds by \eqref{eq:decEst1} and the repeated application of the triangle inequality  that
\begin{align*}
&\big\|u_N({\bs{y}})-\calI_{\bs{p}}[u_N](\bs{y})\big\|_{H^1_0(D)}\\
&\quad\leq\sum_{k=1}^N
\big\|\big(\calI_{p_1}\otimes\ldots\otimes\calI_{p_{k-1}}\otimes(\op{Id}-{\calI_{p_k}})
\otimes \op{Id}\otimes\ldots\otimes\op{Id}\big)u_N({\bs{y}}\big)\big\|_{H^1_0(D)}\\
&\quad\leq\sum_{k=1}^N\bigg[\prod_{m=1}^{k-1}\bigg(\frac{2}{\pi}\log (p_m+1)+1\bigg)\bigg]
\bigg[\frac{1}{2^{p_k}(p_k+1)!}C(p_k+1)!c^{p_k+1}\gamma_k^{p_k+1}\bigg]\|f\|_{L^2(D)}\\
&\quad=\sum_{k=1}^N\bigg[\prod_{m=1}^{k-1}\bigg(\frac{2}{\pi}\log(p_m+1)+1\bigg)\bigg]
\bigg[2\bigg(\frac{c\gamma_k}{2}\bigg)^{p_k+1}C c\bigg]\|f\|_{L^2(D)}.
\end{align*}
Thus, with 
\[p_k=\bigg\lceil\frac{\log(\varepsilon)}{\log(c\gamma_k/2)}\bigg\rceil-1,
\]
we obtain
\[
\big\|u_N({\bs{y}})-\calI_{\bs{p}}[u_N](\bs{y})\big\|_{H^1_0(D)}
\leq Cc\varepsilon\Bigg(\sum_{k=1}^N\bigg[\prod_{m=1}^{k-1}\bigg(\frac{2}{\pi}\log(p_m+1)+1\bigg)\bigg]\Bigg)\|f\|_{L^2(D)}.
\]
\end{proof}

\begin{remark}
The constant \(C(\bs{p})\) from the previous theorem can be bounded according to
\[
C_s(\bs{p})\leq C(\bs{p})\leq(N+1)C_s(\bs{p}).
\]
where we recall that \(C_s(\bs{p})\) denotes the stability constant of \(\calI_{\bs{p}}\).
Thus, \(C(\bs{p})\) also potentially grows exponentially as \(N\to\infty\).

\end{remark}
\subsection{Finite Element Approximation}
In order to compute the coefficients \(u_N(\bs{y}_{\bs{k}})\) in $\eqref{eq:CollcoeffEQ}$, we consider an approximation by the finite element method.
To this end, let $\mathcal{T}_0=\{\tau_{0,k}\}$ be a coarse grid triangulation
of the domain $D$. Then, for $\ell
\ge 1$, a uniform and shape regular triangulation
$\mathcal{T}_\ell=\{\tau_{\ell,k}\}$ is recursively obtained by uniformly 
refining each element $\tau_{\ell-1,k}$ into $2^d$ 
elements with diameter $h_\ell\eqsim 2^{-\ell}$.
We define the space of piecewise linear finite elements according to
\begin{equation}\label{eq:def_FE_space}
\mathcal{S}_\ell^1(D)\isdef\{v\in C(D):v|_{\partial D}=0\ \text{and}\ 
	v|_\tau\in\Pi_1\ \text{for all}\ \tau\in\mathcal{T}_\ell\}\subset H_0^1(D),
\end{equation}
where $\Pi_1$ denotes the space of all polynomials of total 
degree $1$. Then, the finite element approximations 
$u_{N,\ell}({\bs{y}_{\bs{k}}})\in\mathcal{S}_\ell^1(D)$ to the coefficients 
\(u_N({\bs{y}_{\bs{k}}})\) satisfy the following well known error estimate.
\begin{lemma}
Let the domain $D$ be convex and $f\in L^2(D)$. Then, for \(\bs{y}\in\Gamma\),
the finite element solution $u_{N,\ell}(\bs{y})\in\mathcal{S}_\ell^1(D)$
of the diffusion problem \eqref{eq:paramEQ} satisfies the error 
estimate
\begin{equation}	\label{eq:err-est-1}
  \|u_N(\bs{y})-u_{N,\ell}(\bs{y})\|_{H^1(D)}\lesssim 2^{-\ell}\|u_N(\bs{y})\|_{H^2(D)}\lesssim 2^{-\ell}\|f\|_{L^2(D)}.
\end{equation}
\end{lemma}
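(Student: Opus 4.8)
The plan is to treat this, for fixed $\bs{y}\in\Gamma$, as a classical Galerkin finite element error estimate and to establish the two inequalities separately: the first by quasi-optimality combined with a standard interpolation estimate, the second by elliptic regularity. The recurring point will be that every implied constant can be chosen independently of $\bs{y}$, which is exactly what makes the estimate useful in the collocation context.

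First I would pass to the weak form of \eqref{eq:paramEQ}: for fixed $\bs{y}$, the solution $u_N(\bs{y})\in H^1_0(D)$ satisfies $\calB_{\bs{y}}(u_N(\bs{y}),v)=\int_D fv\,\dd x$ for all $v\in H^1_0(D)$, where $\calB_{\bs{y}}(w,v)\isdef\int_D a_N(\bs{y})\nabla w\cdot\nabla v\,\dd x$, and $u_{N,\ell}(\bs{y})\in\calS_\ell^1(D)$ is the corresponding Galerkin approximation. By the uniform ellipticity \eqref{eq:unifcond}, the form $\calB_{\bs{y}}$ is coercive with constant $a_\mathrm{min}$ and continuous with constant $a_\mathrm{max}$, both independent of $\bs{y}$. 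Galerkin orthogonality then yields C\'ea's lemma,
\[
\|u_N(\bs{y})-u_{N,\ell}(\bs{y})\|_{H^1(D)}\lesssim\frac{a_\mathrm{max}}{a_\mathrm{min}}\inf_{v_\ell\in\calS_\ell^1(D)}\|u_N(\bs{y})-v_\ell\|_{H^1(D)},
\]
where the suppressed factor is the Poincar\'e constant of $D$.

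To bound the best-approximation error, I would insert the nodal interpolant $I_\ell u_N(\bs{y})\in\calS_\ell^1(D)$ and invoke the standard piecewise-linear interpolation estimate on the shape-regular family $\{\mathcal{T}_\ell\}$, namely $\|w-I_\ell w\|_{H^1(D)}\lesssim h_\ell\|w\|_{H^2(D)}$ for $w\in H^2(D)\cap H^1_0(D)$; since $h_\ell\eqsim 2^{-\ell}$, this yields the first inequality as soon as $u_N(\bs{y})\in H^2(D)$ is known. The second inequality is the a priori bound $\|u_N(\bs{y})\|_{H^2(D)}\lesssim\|f\|_{L^2(D)}$, which follows from $H^2$-regularity of the Dirichlet problem on the convex domain $D$, with an implied constant depending only on $D$, on $a_\mathrm{min}$, and on the $W^{1,\infty}(D)$-norm of the coefficient.

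The hard part is to make the elliptic-regularity constant uniform in $\bs{y}$. Here I would exploit the structure of the coefficient: since $a_N(\bs{y},x)=\E[a](x)+\sum_{n=1}^N\sqrt{\lambda_n}\,y_n\varphi_n(x)$ is a finite sum of the smooth eigenfunctions $\varphi_n$ with $y_n\in[-1,1]$, its $W^{1,\infty}(D)$-norm is bounded uniformly over $\Gamma$, so the $H^2$-regularity constant may be taken independent of $\bs{y}$; together with the $\bs{y}$-independent coercivity and continuity constants from \eqref{eq:unifcond}, every $\lesssim$ above is then uniform in $\bs{y}$. A minor point to verify is that the nodal interpolant is well defined, which holds because $H^2(D)\hookrightarrow C(\overline{D})$ for $d\le 3$.
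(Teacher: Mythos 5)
The paper offers no proof of this lemma at all---it is quoted as a ``well known'' finite element estimate---and your argument is precisely the standard proof behind it: C\'ea's lemma with coercivity and continuity constants $a_\mathrm{min},a_\mathrm{max}$ independent of $\bs{y}$ thanks to \eqref{eq:unifcond}, the piecewise-linear nodal interpolation bound on the shape-regular family with $h_\ell\eqsim 2^{-\ell}$ (including the correct remark that $H^2(D)\hookrightarrow C(\overline{D})$ for $d\le 3$), and $H^2$-regularity of the Dirichlet problem on the convex domain, so the proposal is correct. Your one slightly overstated step---calling the $\varphi_n$ ``smooth,'' whereas eigenfunctions are only as regular as the covariance kernel permits---is harmless here, since the uniform bound on $\|a_N(\bs{y},\cdot)\|_{W^{1,\infty}(D)}$ over the compact set $\Gamma$ that your elliptic-regularity argument actually needs is exactly the standing hypothesis the paper itself relies on later, where $\gamma_k=\|\sqrt{\lambda_k}\varphi_k\|_{W^{1,\infty}(D)}$ enters the mixed regularity estimate of Lemma~\ref{lem:decay}.
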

Note that we restrict ourselves here to the situation of piecewise linear
finite elements. Nevertheless, by applying obvious modifications, the
presented results remain valid also for higher order finite elements.
Moreover, for the sake of simplicity, we consider here nested sequences
of finite element spaces, i.e.,
\begin{equation}\label{eq:nested_FEM}
\mathcal{S}_0^1(D)\subset \mathcal{S}_1^1(D)\subset\ldots.
\end{equation}
This is not a requirement, as has been discussed in \cite{GriHarPet2015}.

\subsection{Stochastic Collocation Error}
By a tensor product argument, the combination of the finite element approximation 
in the spatial variable
and the interpolation in the parameter yields the following approximation
result.
\begin{theorem} Let the polynomial degree \(\bs{p}\)
be chosen such that there holds 
\[
\big\|u_N({\bs{y}})-\calI_{\bs{p}}[u_N](\bs{y})\big\|_{H^1_0(D)}\lesssim 2^{-\ell}C(\bs{p})\|f\|_{L^2(D)},
\]
where \(u_{N,\ell}(\bs{y})\) is the finite element approximation to \(u_N(\bs{y})\)
on level \(\ell\) that fulfills \eqref{eq:err-est-1}.
Then, the fully discrete approximation \(\calI_{\bs{p}}[u_{N,\ell}]\in\mathcal{P}_{\bs{p}}(\Gamma)\otimes\mathcal{S}^1_j(D)\)
satisfies the error estimate
\[
\big\|u_N({\bs{y}})-\calI_{\bs{p}}[u_{N,\ell}](\bs{y})\big\|_{H^1_0(D)}\lesssim 2^{-\ell}\big(C(\bs{p})+C_s(\bs{p})\big)\|f\|_{L^2(D)},
\]
where \(C_s(\bs{p})\) denotes the stability constant of \(\calI_{\bs{p}}\).
\end{theorem}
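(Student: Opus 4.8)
The plan is to combine the linearity of the collocation operator $\calI_{\bs{p}}$ with a single triangle inequality, inserting the intermediate quantity $\calI_{\bs{p}}[u_N]$ so as to split the total error into a pure interpolation contribution and a contribution measuring how the finite element error is propagated through the interpolation. Concretely, using $\calI_{\bs{p}}[u_N]-\calI_{\bs{p}}[u_{N,\ell}]=\calI_{\bs{p}}[u_N-u_{N,\ell}]$, I would write
\[
\big\|u_N(\bs{y})-\calI_{\bs{p}}[u_{N,\ell}](\bs{y})\big\|_{H^1_0(D)}\leq\big\|u_N(\bs{y})-\calI_{\bs{p}}[u_N](\bs{y})\big\|_{H^1_0(D)}+\big\|\calI_{\bs{p}}[u_N-u_{N,\ell}](\bs{y})\big\|_{H^1_0(D)}.
\]
The first summand is bounded by $2^{-\ell}C(\bs{p})\|f\|_{L^2(D)}$ up to a constant, directly by the hypothesis on the choice of $\bs{p}$, so the whole task reduces to controlling the second summand.

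For the second summand the key step is to pass from the pointwise $H^1_0(D)$-value at a fixed $\bs{y}$ to the norm in which the stability of $\calI_{\bs{p}}$ is available. Bounding the pointwise value by the supremum over $\Gamma$ and then applying the tensor-product stability estimate recorded before Theorem~\ref{thm:interpErr}, I would estimate
\[
\big\|\calI_{\bs{p}}[u_N-u_{N,\ell}](\bs{y})\big\|_{H^1_0(D)}\leq\big\|\calI_{\bs{p}}[u_N-u_{N,\ell}]\big\|_{C^0(\Gamma;H^1_0(D))}\leq C_s(\bs{p})\,\big\|u_N-u_{N,\ell}\big\|_{C^0(\Gamma;H^1_0(D))}.
\]
This uses $u_N-u_{N,\ell}\in C^0(\Gamma;H^1_0(D))$, which follows from the continuous dependence of the exact and the discrete solution on the parameter $\bs{y}$.

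It then remains to bound $\big\|u_N-u_{N,\ell}\big\|_{C^0(\Gamma;H^1_0(D))}$, for which I would invoke the finite element error estimate \eqref{eq:err-est-1}. Since this estimate holds for every $\bs{y}\in\Gamma$ with a hidden constant independent of $\bs{y}$ --- the $H^2$-regularity bound being uniform by the uniform ellipticity \eqref{eq:unifcond} --- taking the supremum over $\bs{y}\in\Gamma$ yields $\big\|u_N-u_{N,\ell}\big\|_{C^0(\Gamma;H^1_0(D))}\lesssim 2^{-\ell}\|f\|_{L^2(D)}$, and hence the second summand is bounded by $2^{-\ell}C_s(\bs{p})\|f\|_{L^2(D)}$ up to a constant. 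Adding the two contributions gives the claimed estimate. I do not expect a genuine obstacle here, since the argument is merely a triangle inequality followed by stability and the finite element bound; the only point demanding care is the norm bookkeeping --- the statement is pointwise in $\bs{y}$ whereas the stability estimate lives in $C^0(\Gamma;H^1_0(D))$ --- together with the observation that \eqref{eq:err-est-1} is uniform in $\bs{y}$, so that passing to the supremum costs nothing.
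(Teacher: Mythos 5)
Your proof is correct and coincides with the paper's intended argument: the paper states this theorem without a written-out proof, attributing it to precisely the tensor-product argument you execute --- inserting $\calI_{\bs{p}}[u_N]$, using linearity of $\calI_{\bs{p}}$ and the triangle inequality, bounding the first term by the hypothesis and the second by the stability constant $C_s(\bs{p})$ combined with the $\bs{y}$-uniform finite element estimate \eqref{eq:err-est-1}, whose hidden constant is indeed independent of $\bs{y}$ by the uniform ellipticity \eqref{eq:unifcond}. Your norm bookkeeping between the pointwise $H^1_0(D)$ statement and the $C^0\big(\Gamma;H^1_0(D)\big)$ stability estimate is handled correctly, so nothing is missing.
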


%

\section{Multilevel Approximation} \label{sec:multilevel}
In the previous section, we have introduced the classical stochastic collocation as it has been proposed in, e.g., 
\cite{BabNobTem2010}. The related error estimate is in this case based on a tensor product argument between
the spatial approximation and the discretization of the parameter. Now, the idea of the related multilevel 
approximation is to perform an error equilibration as it is known from \emph{sparse} tensor product approximations,
cf.~\cite{BG}.

\subsection{Multilevel Scheme}
We start by representing the finite element approximation \(u_{N,L}(\bs{y})\) for a maximal level $L\in\N$ by the telescoping sum
\[
u_{N,L}(\bs{y})=\sum_{\ell=0}^L \big(u_{N,\ell}(\bs{y})-u_{N,\ell-1}(\bs{y})\big)\quad\text{with }u_{N,-1}\isdef 0.
\]
Instead of applying the interpolation operator in the parameter \(\bs{y}\in\Gamma\) with a fixed degree \({\bs{p}}\),
we adapt the degree to the finite element approximation level and obtain the multilevel approximation
\begin{equation}\label{eq:MLrep}
u_N(\bs{y})\approx u_{N,L}^{\operatorname{M\!L}}(\bs{y})\isdef\sum_{\ell=0}^L \calI_{\bs{p}^{(\ell)}}\big[u_{N,\ell}-u_{N,\ell-1}\big](\bs{y}).
\end{equation}
The goal is now to choose the polynomial degrees \(\{\bs{p}^{(\ell)}\}\) antipodal to the refinement level $\ell$ of the
finite element approximation and to equilibrate a high spatial accuracy with a relatively low polynomial degree.
In order to facilitate this, it is crucial to have the following \emph{mixed regularity} estimate for \(u_N\).
There holds
\[
\big\|\partial^{\balpha}_{\bs{y}}u(\bs{y})\big\|_{H^2(D)}\leq C|\balpha|!c^{|\balpha|}\tilde{\bgamma}^\balpha\|f\|_{L^2(D)},\text{ where }\tilde{\gamma}_k\isdef\sqrt{\lambda_k}\|\varphi_k\|_{W^{1,\infty}(D)},
\]
cp.~\eqref{eq:KLE}, for some constants \(C,c>0\).
See \cite{CDS} for a proof of this statement in the affine case and \cite{HS11} for the log-normal case.
The estimate for the log-uniform case can be derived with the same techniques that are applied in these works.
From this estimate, one can derive the parametric smoothness of the Galerkin error. This is stated by the following lemma
which is, e.g.,~proven in \cite{HPS2,Kuo2015}.
\begin{lemma}\label{lem:decay}
For the error of the Galerkin projection, 
there holds the estimate
\begin{equation*}\label{eq:mixest}
\big\|\partial^{\balpha}_{\bs{y}}(u_N-u_{N,\ell})(\bs{y})\big\|_{H^1(D)}\lesssim
2^{-\ell}|\balpha|!
c^{|\balpha|}{\bgamma}^{\balpha}\|f\|_{L^2(D)}
\quad\text{for all $|\boldsymbol\alpha|\ge 0$}
\end{equation*}
with a constant \(c>0\) depending on \(a_{\min}\) and \(a_{\max}\),
where \(\gamma_k\isdef\|\sqrt{\lambda_k}\varphi_k\|_{W^{1,\infty}(D)}\) from \eqref{eq:KLE} and \(\bgamma\isdef(\gamma_1,\ldots,\gamma_m)\).
\end{lemma}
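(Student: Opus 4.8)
The plan is to argue by induction on the differentiation order $|\balpha|$, exploiting the recursion that parametric differentiation induces on the Galerkin problem. Write
\[
b(\bs{y};v,w)\isdef\int_D a_N(\bs{y})\,\nabla v\cdot\nabla w\,\dd x
\]
for the bilinear form, which by \eqref{eq:unifcond} is coercive and bounded on $H^1_0(D)$ uniformly in $\bs{y}$, so that $u_N(\bs{y})$ and $u_{N,\ell}(\bs{y})$ are its Galerkin solutions in $H^1_0(D)$ and in $\mathcal{S}_\ell^1(D)$, respectively. In the affine case $a_N(\bs{y},x)=\E[a](x)+\sum_{n=1}^N\sqrt{\lambda_n}y_n\varphi_n(x)$ only the first-order coefficient derivatives $\partial_{y_n}a_N=\sqrt{\lambda_n}\varphi_n$ survive; with $b_n(v,w)\isdef\int_D\sqrt{\lambda_n}\varphi_n\,\nabla v\cdot\nabla w\,\dd x$, applying the Leibniz rule to the weak formulations $b(\bs{y};u_N,v)=\int_D f\,v\,\dd x$ and $b(\bs{y};u_{N,\ell},v_h)=\int_D f\,v_h\,\dd x$ yields, for $w^{\balpha}\isdef\partial^{\balpha}_{\bs{y}}(u_N-u_{N,\ell})$ and every $v_h\in\mathcal{S}_\ell^1(D)$, the defect relation
\[
b(\bs{y};w^{\balpha},v_h)=-\sum_{n:\,\alpha_n\ge1}\alpha_n\,b_n\big(w^{\balpha-\bs{e}_n},v_h\big).
\]

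First I would treat the base case $\balpha=0$, which is exactly the finite element estimate \eqref{eq:err-est-1}. For the inductive step I would split $w^{\balpha}=(\partial^{\balpha}_{\bs{y}}u_N-z^{\balpha})+(z^{\balpha}-\partial^{\balpha}_{\bs{y}}u_{N,\ell})$, where $z^{\balpha}\in\mathcal{S}_\ell^1(D)$ is the $b(\bs{y};\cdot,\cdot)$-Galerkin projection of $\partial^{\balpha}_{\bs{y}}u_N$. The first term is controlled by Céa's lemma, the standard nodal interpolation estimate, and the mixed regularity bound stated above, giving $\|\partial^{\balpha}_{\bs{y}}u_N-z^{\balpha}\|_{H^1(D)}\lesssim 2^{-\ell}\|\partial^{\balpha}_{\bs{y}}u_N\|_{H^2(D)}\lesssim 2^{-\ell}|\balpha|!\,c^{|\balpha|}\bgamma^{\balpha}\|f\|_{L^2(D)}$, since the lemma's weight $\gamma_k=\|\sqrt{\lambda_k}\varphi_k\|_{W^{1,\infty}(D)}$ coincides with the $H^2$-regularity weight. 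The second term lies in $\mathcal{S}_\ell^1(D)$ and, because $z^{\balpha}$ is the Galerkin projection, satisfies $b(\bs{y};z^{\balpha}-\partial^{\balpha}_{\bs{y}}u_{N,\ell},v_h)=b(\bs{y};w^{\balpha},v_h)=-\sum_n\alpha_n b_n(w^{\balpha-\bs{e}_n},v_h)$; testing with $v_h=z^{\balpha}-\partial^{\balpha}_{\bs{y}}u_{N,\ell}$, using coercivity and $|b_n(v,w)|\le\gamma_n\|v\|_{H^1(D)}\|w\|_{H^1(D)}$, gives $\|z^{\balpha}-\partial^{\balpha}_{\bs{y}}u_{N,\ell}\|_{H^1(D)}\lesssim\sum_{n:\,\alpha_n\ge1}\alpha_n\gamma_n\|w^{\balpha-\bs{e}_n}\|_{H^1(D)}$. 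Inserting the induction hypothesis and using $\gamma_n\bgamma^{\balpha-\bs{e}_n}=\bgamma^{\balpha}$ together with $\sum_n\alpha_n=|\balpha|$ collapses this to $2^{-\ell}|\balpha|!\,c^{|\balpha|-1}\bgamma^{\balpha}\|f\|_{L^2(D)}$. Adding the two contributions and choosing $c$ and the prefactor large enough (for instance $c$ at least twice the continuity-to-coercivity ratio) closes the induction.

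The main obstacle is that the Galerkin projection does not commute with parametric differentiation, because $b(\bs{y};\cdot,\cdot)$ itself depends on $\bs{y}$; consequently $\partial^{\balpha}_{\bs{y}}u_{N,\ell}$ is not the Galerkin projection of $\partial^{\balpha}_{\bs{y}}u_N$, which is precisely what forces the auxiliary projection $z^{\balpha}$ and the recursive coupling to the lower-order errors $w^{\balpha-\bs{e}_n}$. The only genuinely delicate point is the combinatorial bookkeeping that propagates the factorial-geometric weight $|\balpha|!\,c^{|\balpha|}\bgamma^{\balpha}$ through this recursion; in the affine case this is immediate as above, but in the log-uniform setting the coefficient is no longer affine in $\bs{y}$, so the Leibniz expansion does not terminate and one must instead estimate the full convolution sum $\sum_{\bs{\beta}<\balpha}\binom{\balpha}{\bs{\beta}}\|\partial^{\balpha-\bs{\beta}}_{\bs{y}}a_N\|_{W^{1,\infty}(D)}\|w^{\bs{\beta}}\|_{H^1(D)}$ and verify that the assumed growth of the coefficient derivatives still reproduces the same $|\balpha|!\,c^{|\balpha|}\bgamma^{\balpha}$ form; this final step I would carry out following the techniques of \cite{HS11,HPS2}.
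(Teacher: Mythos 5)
Your proposal is correct in the affine case, but note that the paper itself contains no proof of this lemma: it is stated with a pointer to \cite{HPS2,Kuo2015} (and to \cite{CDS,HS11} for the underlying mixed regularity estimate), so the comparison is against that literature rather than an in-text argument. Measured this way, your route is essentially the standard one used in those references: differentiating the continuous and discrete weak formulations to get the defect relation $b(\bs{y};w^{\balpha},v_h)=-\sum_{n:\alpha_n\ge1}\alpha_n b_n(w^{\balpha-\bs{e}_n},v_h)$, introducing the $b(\bs{y};\cdot,\cdot)$-Ritz projection $z^{\balpha}$ of $\partial^{\balpha}_{\bs{y}}u_N$ precisely because parametric differentiation does not commute with the Galerkin projection, handling the consistency part by C\'ea's lemma together with the mixed regularity bound stated just before the lemma (whose weight $\tilde\gamma_k=\sqrt{\lambda_k}\|\varphi_k\|_{W^{1,\infty}(D)}$ is the lemma's $\gamma_k$), and closing the recursion via $\gamma_n\bgamma^{\balpha-\bs{e}_n}=\bgamma^{\balpha}$ and $\sum_n\alpha_n=|\balpha|$. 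Your observation that $c$ must absorb the coercivity-to-continuity ratio is exactly why the lemma makes $c$ depend on $a_{\min}$ and $a_{\max}$.

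Two points should be made explicit in a write-up. First, the induction cannot be run with unquantified $\lesssim$: you must fix constants $C_0,c$ independent of $\balpha$ and verify an inequality of the form $C_1c_1^{|\balpha|}+(C_2/a_{\min})\,C_0\,c^{|\balpha|-1}\le C_0\,c^{|\balpha|}$, which holds for, e.g., $C_0\ge 2C_1$ and $c\ge\max(c_1,\,2C_2/a_{\min})$; your closing remark gestures at this, but the uniformity of the hidden constants over all $\balpha$ is the entire content of the inductive step. Second, your treatment of the log-uniform case is a deferral rather than a proof: there the Leibniz expansion becomes a genuine convolution, and one needs geometric bounds on the coefficient derivatives of the form $\|\partial^{\bs{\beta}}_{\bs{y}}a_N(\bs{y})\|_{W^{1,\infty}(D)}\lesssim(1+|\bs{\beta}|)\,\bgamma^{\bs{\beta}}$ together with a combinatorial inequality such as $\binom{\balpha}{\bs{\beta}}\,|\bs{\beta}|!\,|\balpha-\bs{\beta}|!\le|\balpha|!$ to recover the same factorial-geometric form. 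This is acceptable here because the paper does exactly the same, asserting only that the log-uniform case ``can be derived with the same techniques'' as in \cite{CDS,HS11}; but it means your argument, as written, fully proves the lemma only for affine coefficients.
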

With this lemma at hand, it is easy to derive the following error estimate in complete analogy to the proof of Theorem~\ref{thm:interpErr}.
\begin{theorem}
Let the degree \(\bs{p}^{(\ell')}\in\mathbb{N}^N\) be such that the interpolation error is \(C\big(\bs{p}^{(\ell')}\big)\varepsilon\eqsim2^{-\ell'}\).
Then, there holds the error estimate
\begin{equation}\label{eq:genest}
\big\|(\operatorname{Id}-\calI_{\bs p^{(\ell')}})[u_N-u_{N,\ell}](\bs{y})\big\|_{H^1(D)}\lesssim 2^{-(\ell+\ell')}\|f\|_{L^2(D)}.
\end{equation}
\end{theorem}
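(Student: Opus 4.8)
The plan is to follow the proof of Theorem~\ref{thm:interpErr} line by line, replacing the solution $u_N$ by the Galerkin difference $v\isdef u_N-u_{N,\ell}$ and the decay estimate \eqref{eq:decEst1} by the mixed regularity estimate of Lemma~\ref{lem:decay}. The single structural difference between the two settings is the extra factor $2^{-\ell}$ carried by every directional derivative of $v$, and it is precisely this factor that produces the gain $2^{-\ell}$ over the pure interpolation bound of Theorem~\ref{thm:interpErr}.

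First I would expand the tensor-product error operator by the telescoping identity
\[
\op{Id}-\calI_{\bs p^{(\ell')}}=\sum_{k=1}^N\calI_{p_1^{(\ell')}}\otimes\cdots\otimes\calI_{p_{k-1}^{(\ell')}}\otimes(\op{Id}-\calI_{p_k^{(\ell')}})\otimes\op{Id}\otimes\cdots\otimes\op{Id},
\]
apply it to $v$, and estimate each of the $N$ summands in $H^1(D)$ by the triangle inequality. In the $k$-th summand the first $k-1$ univariate factors are controlled by the Lebesgue-type stability bound $\frac{2}{\pi}\log(p_m^{(\ell')}+1)+1$, while the univariate error factor $(\op{Id}-\calI_{p_k^{(\ell')}})$ acting in the $k$-th coordinate is bounded by the Chebyshev estimate $\frac{1}{2^{p_k^{(\ell')}}(p_k^{(\ell')}+1)!}\max_{\xi\in[-1,1]}\|\partial_{y_k}^{p_k^{(\ell')}+1}v\|_{H^1(D)}$; the remaining identity factors are absorbed by passing to the supremum over the corresponding parameters, which is legitimate because Lemma~\ref{lem:decay} holds uniformly in $\bs{y}\in\Gamma$.

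Next I would insert the mixed regularity bound from Lemma~\ref{lem:decay} with the multiindex $\balpha=(p_k^{(\ell')}+1)\,e_k$, so that $|\balpha|!=(p_k^{(\ell')}+1)!$ and $\bgamma^\balpha=\gamma_k^{p_k^{(\ell')}+1}$. The factorial cancels against the one in the Chebyshev estimate, leaving each summand bounded, up to an absolute constant, by
\[
2^{-\ell}\Big[\prod_{m=1}^{k-1}\Big(\tfrac{2}{\pi}\log(p_m^{(\ell')}+1)+1\Big)\Big]\Big(\tfrac{c\gamma_k}{2}\Big)^{p_k^{(\ell')}+1}\|f\|_{L^2(D)},
\]
exactly as in Theorem~\ref{thm:interpErr} but now with the additional prefactor $2^{-\ell}$. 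Summing over $k$ and using the degree choice (which, as there, forces $(c\gamma_k/2)^{p_k^{(\ell')}+1}\lesssim\varepsilon$ and thereby yields the factor $C(\bs p^{(\ell')})\varepsilon$) gives the intermediate bound $\lesssim 2^{-\ell}\,C(\bs p^{(\ell')})\varepsilon\,\|f\|_{L^2(D)}$. Finally I would invoke the hypothesis $C(\bs p^{(\ell')})\varepsilon\eqsim 2^{-\ell'}$ to replace $C(\bs p^{(\ell')})\varepsilon$ by $2^{-\ell'}$ and arrive at \eqref{eq:genest}.

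I do not expect a genuine obstacle, since the claim is explicitly asserted to follow ``in complete analogy''; the only points requiring care are bookkeeping ones. The main things to verify are that $v=u_N-u_{N,\ell}$ lies in $C^0(\Gamma;H^1_0(D))$ with the regularity needed to apply the univariate Chebyshev estimates, which follows from the analytic parameter dependence underlying Lemma~\ref{lem:decay}, and that the factor $2^{-\ell}$ supplied by that lemma is uniform in $\bs y$ so that it survives the passage to the supremum over the remaining parameters. One should also keep the tacit assumption $c\gamma_k<2$ (as in Theorem~\ref{thm:interpErr}, now with $\gamma_k$ as in Lemma~\ref{lem:decay}) so that the degree formula is well defined and the geometric factors are summable.
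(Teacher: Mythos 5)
Your proposal is correct and is precisely the argument the paper intends: the paper gives no written-out proof, stating only that the estimate follows ``in complete analogy'' to Theorem~\ref{thm:interpErr} via Lemma~\ref{lem:decay}, and your line-by-line adaptation---telescoping the tensor-product error operator, trading the decay estimate for the mixed regularity bound with its uniform factor $2^{-\ell}$, and invoking the degree hypothesis $C\big(\bs{p}^{(\ell')}\big)\varepsilon\eqsim 2^{-\ell'}$---executes exactly that. Your bookkeeping remarks (the redefined $\gamma_k$ from Lemma~\ref{lem:decay}, the tacit assumption $c\gamma_k<2$, and uniformity in $\bs{y}$) are all apt.
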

\begin{theorem}\label{theo:MLest}
Let $\big\{\bs{p}^{(\ell')}\big\}\in\mathbb{N}^N$ be a sequence of 
polynomial degrees, that give rise to an error estimate of
the form \eqref{eq:genest} with \(\ell'=L-\ell\), where
\(u_N\in L^2_\rho\big(\Gamma,H^1_0(D)\big)\) is the solution to 
\eqref{eq:paramEQ} that satisfies \eqref{eq:err-est-1}. 
Then, the error of the multilevel approximation \eqref{eq:MLrep}
is bounded by
\begin{equation}\label{eq:errormlest}
\bigg\|u_N(\bs{y})-\sum_{\ell=0}^L \calI_{\bs{p}^{(\ell)}}\big[u_{N,\ell}-u_{N,\ell-1}\big](\bs{y})\bigg\|_{H^1_0(D)}
   		\lesssim 2^{-L}L\|f\|_{L^{2}(D)}.
\end{equation}
\end{theorem}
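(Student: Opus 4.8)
The plan is to split the total error into a pure finite element contribution and a pure interpolation contribution, and to control the latter by a double telescoping argument. First I would write
\[
u_N - u_{N,L}^{\operatorname{M\!L}} = (u_N - u_{N,L}) + (u_{N,L} - u_{N,L}^{\operatorname{M\!L}})
\]
and apply the triangle inequality. The first summand is the finite element error on the finest level $L$, which \eqref{eq:err-est-1} bounds by $2^{-L}\|f\|_{L^2(D)}$, so the whole difficulty sits in the second summand.

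For the interpolation part I would exploit the telescoping structure of \eqref{eq:MLrep}: since $u_{N,L} = \sum_{\ell=0}^L (u_{N,\ell} - u_{N,\ell-1})$, subtracting \eqref{eq:MLrep} gives
\[
u_{N,L} - u_{N,L}^{\operatorname{M\!L}} = \sum_{\ell=0}^L (\operatorname{Id} - \calI_{\bs{p}^{(\ell)}})[u_{N,\ell} - u_{N,\ell-1}].
\]
The key step is then to bound each summand by $2^{-L}\|f\|_{L^2(D)}$, uniformly in $\ell$. To this end I would rewrite the finite element difference as a difference of Galerkin errors, $u_{N,\ell} - u_{N,\ell-1} = (u_N - u_{N,\ell-1}) - (u_N - u_{N,\ell})$, and apply \eqref{eq:genest} to each piece with the calibrated interpolation rate $\ell' = L - \ell$. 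Because the degree $\bs{p}^{(\ell)}$ is chosen antipodally so that its interpolation rate is $2^{-(L-\ell)}$, the piece at Galerkin level $\ell$ contributes $2^{-(\ell + (L-\ell))} = 2^{-L}$ and the piece at Galerkin level $\ell - 1$ contributes $2^{-((\ell-1)+(L-\ell))} = 2^{-(L-1)}$; both are $\lesssim 2^{-L}\|f\|_{L^2(D)}$.

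The boundary term $\ell = 0$ needs separate attention, since $u_{N,-1} = 0$ is not a Galerkin difference. Here I would write $u_{N,0} = u_N - (u_N - u_{N,0})$, bound $(\operatorname{Id} - \calI_{\bs{p}^{(0)}})[u_N]$ directly by Theorem~\ref{thm:interpErr} with $\bs{p}^{(0)}$ tuned to rate $2^{-L}$, and bound $(\operatorname{Id} - \calI_{\bs{p}^{(0)}})[u_N - u_{N,0}]$ by \eqref{eq:genest} at Galerkin level $0$; again both are $\lesssim 2^{-L}\|f\|_{L^2(D)}$. Summing the $L+1$ summands, each of size $\lesssim 2^{-L}$, yields $\lesssim (L+1)\,2^{-L}\|f\|_{L^2(D)} \lesssim L\,2^{-L}\|f\|_{L^2(D)}$, and adding the finite element contribution from the first step gives the claimed bound \eqref{eq:errormlest}.

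The proof is essentially bookkeeping once these decompositions are in place; the only genuinely delicate points are the index calibration $\ell' = L - \ell$ that realizes the error equilibration, and the observation that the factor $L$ is not an artifact of a crude estimate but simply reflects the accumulation of $L+1$ equal-sized contributions with no cancellation across levels. I would therefore expect the main obstacle to be stating the calibration cleanly and handling the $\ell = 0$ endpoint, rather than any analytic difficulty.
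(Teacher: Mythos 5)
Your proposal is correct and follows essentially the same route as the paper: the same splitting into the finite element error $u_N-u_{N,L}$ plus the telescoping sum of interpolation errors, the same rewriting of $u_{N,\ell}-u_{N,\ell-1}$ as a difference of two Galerkin errors, and the same application of \eqref{eq:genest} with the calibration $\ell'=L-\ell$ to get $L+1$ terms of size $\lesssim 2^{-L}\|f\|_{L^2(D)}$. Your separate treatment of the endpoint $\ell=0$ is in fact slightly more careful than the paper, which simply applies the generic bound with the convention $u_{N,-1}=0$ (implicitly using that the interpolation error of $u_N$ itself at degree $\bs{p}^{(L)}$ is $\lesssim 2^{-L}\lesssim 2^{-(L-1)}$).
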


\begin{proof}
We shall apply the following multilevel splitting of the error 
\begin{equation}\label{eq:multsplitting}
\begin{aligned}
&\bigg\|u_N(\bs{y})-\sum_{\ell=0}^L\calI_{\bs{p}^{(\ell)}}\big[u_{N,\ell}-u_{N,\ell-1}\big](\bs{y})\bigg\|_{H^1_0(D)}\\
&\quad=\bigg\|u_N(\bs{y})-u_{N,L}(\bs{y})+\sum_{\ell=0}^L(u_{N,\ell}-u_{N,\ell-1})(\bs{y})-\sum_{\ell=0}^L \calI_{\bs{p}^{(\ell)}}\big[u_{N,\ell}-u_{N,\ell-1}\big](\bs{y})\bigg\|_{H^1_0(D)}\\
&\quad\le\big\|u_N(\bs{y})-u_{N,L}(\bs{y})\big\|_{H^1_0(D)}
	+ \sum_{\ell=0}^L\big\|(\operatorname{Id}-\calI_{\bs p^{(\ell')}})[u_{N,\ell}-u_{N,\ell-1}](\bs{y})\big\|_{H^1_0(D)}. 
\end{aligned}
\end{equation}
The first term just reflects the finite element approximation error and is thus bounded
by \eqref{eq:err-est-1}. Thanks to \eqref{eq:genest}, the
term inside the sum satisfies 
\begin{equation*}
\begin{aligned}
&\big\|(\operatorname{Id}-\calI_{\bs p^{(\ell')}})[u_{N,\ell}-u_{N,\ell-1}](\bs{y})\big\|_{H^1_0(D)}\\
&\qquad\leq
\big\|(\operatorname{Id}-\calI_{\bs p^{(\ell')}})[u_{N}-u_{N,\ell}](\bs{y})\big\|_{H^1_0(D)}
+\big\|(\operatorname{Id}-\calI_{\bs p^{(\ell')}})[u_{N}-u_{N,\ell-1}](\bs{y})\big\|_{H^1_0(D)}\\
&\qquad\lesssim
2^{-(\ell+L-\ell)}\|f\|_{L^2(D)}+2^{-(\ell-1+L-\ell)}\|f\|_{L^2(D)}\lesssim 2^{-L}\|f\|_{L^2(D)}.
\end{aligned}
\end{equation*}
Thus, we can estimate \eqref{eq:multsplitting} as
\begin{align*}
\bigg\|u_N(\bs{y})-\sum_{\ell=0}^L \calI_{\bs{p}^{(\ell)}}\big[u_{N,\ell}-u_{N,\ell-1}\big](\bs{y})\bigg\|_{H^1_0(D)}&\lesssim 2^{-L}\|f\|_{L^2(D)}
+\sum_{\ell=0}^L2^{-L}\|f\|_{L^2(D)}\\
&\leq 2^{-L}(L+2)\|f\|_{L^2(D)}.
\end{align*}
This completes the proof.
\end{proof}

\subsection{Perturbed Multilevel Scheme}
The multilevel scheme from above relies on the exact evaluation of the differences \[\delta_{N,\ell} \isdef u_{N,\ell}-u_{N,\ell-1}\] in the interpolation points $\bs{y}_{\bs{k}}^{(\ell)}\in\calK_{\bs{p}^{(\ell)}}\subset\Gamma$ on each level $\ell$. We now slightly relax this assumption and consider perturbations
\begin{equation}\label{eq:perturbed_pts}
  \tilde{\delta}_{N,\ell,\bs{k}} \approx \delta_{N,\ell}(\bs{y}_{\bs{k}}^{(\ell)}),\qquad \bs{k}\in\calK_{\bs{p}^{(\ell)}},
\end{equation}
and the associated perturbed interpolation
\[
    \tilde{\delta}_{N,\ell}(\bs{y}) \isdef \sum_{\bs{k}\in\calK_{\bs{p}^{(\ell)}}} \tilde{\delta}_{N,\ell,\bs{k}} l_{\bs{k}}(\bs{y}).
\]
In view of \eqref{eq:MLrep}, the perturbed multilevel approximation then reads
\begin{equation}\label{eq:perturbed_ML}
\tilde{u}_{N,L}^{\operatorname{M\!L}}(\bs{y})\isdef\sum_{\ell=0}^L  \tilde{\delta}_{N,\ell}(\bs{y}).
\end{equation}
For each level $\ell$, we have the stability estimate
\begin{equation*}
\| \tilde{\delta}_{N,\ell}(\bs{y}) - \calI_{\bs{p}^{(\ell)}}[\delta_{N,\ell}](\bs{y}) \|_{H^1(D)} \leq C_s(\bs{p}^{(\ell)}) \max_{\bs{k}\in\calK_{\bs{p}^{(\ell)}}} \|
\tilde{\delta}_{N,\ell,\bs{k}} -\delta_{N,\ell}(\bs{y}_{\bs{k}}^{(\ell)})
\|_{H^1(D)}.
\end{equation*}
From Theorem \ref{theo:MLest}, we immediately derive the following lemma.
\begin{lemma}\label{lem:perturbed_ML}
Assume that for all $\bs{k}\in\calK_{\bs{p}^{(\ell)}}$ the perturbations from \eqref{eq:perturbed_pts} fulfill
\[
\norm{\tilde{\delta}_{N,\ell,\bs{k}} -\delta_{N,\ell}(\bs{y}_{\bs{k}}^{(\ell)})}_{H^1(D)} \lesssim 2^{-L}\|f\|_{L^2(D)}.
\]
Then
\[
  \norm{u_N(\bs{y}) - \tilde{u}_{N,L}^{\operatorname{M\!L}}(\bs{y})}_{H^1(D)} \lesssim 2^{-L}L\norm{f}_{L^2(D)}.
\]
\end{lemma}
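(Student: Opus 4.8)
The plan is to interpose the exact multilevel approximation $u_{N,L}^{\operatorname{M\!L}}$ from \eqref{eq:MLrep} and split the error by the triangle inequality,
\[
\norm{u_N(\bs{y}) - \tilde{u}_{N,L}^{\operatorname{M\!L}}(\bs{y})}_{H^1(D)} \le \norm{u_N(\bs{y}) - u_{N,L}^{\operatorname{M\!L}}(\bs{y})}_{H^1(D)} + \norm{u_{N,L}^{\operatorname{M\!L}}(\bs{y}) - \tilde{u}_{N,L}^{\operatorname{M\!L}}(\bs{y})}_{H^1(D)}.
\]
The first summand is precisely the error of the exact multilevel scheme, so Theorem~\ref{theo:MLest} bounds it by $\lesssim 2^{-L}L\norm{f}_{L^2(D)}$, which already has the asserted form. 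It therefore remains to control the perturbation term, i.e.\ to show that it too is of order $2^{-L}L\norm{f}_{L^2(D)}$.

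For the second summand I would exploit the linearity of the interpolation operator. Recalling $\delta_{N,\ell}=u_{N,\ell}-u_{N,\ell-1}$ and the definitions \eqref{eq:MLrep} and \eqref{eq:perturbed_ML}, the difference decomposes into a sum over levels as
\[
u_{N,L}^{\operatorname{M\!L}}(\bs{y}) - \tilde{u}_{N,L}^{\operatorname{M\!L}}(\bs{y}) = \sum_{\ell=0}^L \big( \calI_{\bs{p}^{(\ell)}}[\delta_{N,\ell}](\bs{y}) - \tilde{\delta}_{N,\ell}(\bs{y}) \big),
\]
so that a further application of the triangle inequality reduces the task to bounding each level contribution $\norm{\calI_{\bs{p}^{(\ell)}}[\delta_{N,\ell}](\bs{y}) - \tilde{\delta}_{N,\ell}(\bs{y})}_{H^1(D)}$. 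Here the stability estimate stated immediately before the lemma applies directly: it bounds this quantity by $C_s(\bs{p}^{(\ell)})$ times the worst-case nodal perturbation $\max_{\bs{k}} \norm{\tilde{\delta}_{N,\ell,\bs{k}} - \delta_{N,\ell}(\bs{y}_{\bs{k}}^{(\ell)})}_{H^1(D)}$, and the latter is $\lesssim 2^{-L}\norm{f}_{L^2(D)}$ by the hypothesis of the lemma. Thus each of the $L+1$ levels contributes $\lesssim 2^{-L}\norm{f}_{L^2(D)}$.

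Summing over $\ell=0,\ldots,L$ then yields $\norm{u_{N,L}^{\operatorname{M\!L}}(\bs{y}) - \tilde{u}_{N,L}^{\operatorname{M\!L}}(\bs{y})}_{H^1(D)} \lesssim (L+1)2^{-L}\norm{f}_{L^2(D)} \lesssim 2^{-L}L\norm{f}_{L^2(D)}$, and combining with the bound on the first summand closes the argument. The one point that requires care — and the reason the hypothesis is phrased with $\lesssim$ rather than an explicit tolerance — is the treatment of the level-dependent stability constants $C_s(\bs{p}^{(\ell)})$: the derivation is only \emph{immediate} once one regards these constants as absorbed into the generic constant hidden in $\lesssim$, which is legitimate provided the degrees $\bs{p}^{(\ell)}$ (and hence the logarithmic factors constituting $C_s$) are controlled uniformly in $L$. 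Making this absorption honest, as opposed to the harmless linear-in-$L$ accumulation produced by the levelwise sum, is the only genuinely delicate step in the proof.
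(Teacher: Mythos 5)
Your proof is correct and follows exactly the route the paper intends: the paper gives no written proof, stating only that the lemma follows \emph{immediately} from Theorem~\ref{theo:MLest} together with the stability estimate displayed just before the lemma, and your argument --- splitting off the exact multilevel error via the triangle inequality, decomposing the perturbation term levelwise by linearity, applying the stability bound with the hypothesis at each of the $L+1$ levels, and absorbing the constants $C_s(\bs{p}^{(\ell)})$ into the $\lesssim$ --- is precisely that filled-in derivation. Your closing caveat about the level-dependent stability constants is a fair observation of a subtlety the paper likewise sweeps into its generic constants.
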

A particular perturbation based on low-rank truncations will be considered in the following section.


\section{Low-Rank Tensor Approximation} \label{sec:lowrank}
The main computational challenge in the multilevel scheme presented above is the evaluation of the differences $\delta_{N,\ell}(\bs{y}_{\bs{k}}^{(\ell)})$ for all $\bs{k}\in\calK_{\bs{p}^{(\ell)}}$. To address high parameter dimensions $N$, we suggest to
approximate these differences in a low-rank tensor format.

Let $n_\ell \isdef \operatorname{dim} \calS_\ell^1(D)$ for the finite element space from \eqref{eq:def_FE_space} and let
$\{\psi_{\ell,i} \in\calS_\ell^1(D): i=1,\ldots,n_\ell\}$ be an orthonormal basis of $\calS_\ell^1(D)$
with respect to the $H_0^1$ inner product, i.e.,
\[
  \langle \psi_{\ell,i}, \psi_{\ell,j} \rangle_{H_0^1(D)} = 0, \qquad i\neq j,
\]
and $\norm{\psi_{\ell,i}}_{H_0^1(D)} = 1$.
Given the nestedness assumption \eqref{eq:nested_FEM}, we have
$\delta_{N,\ell}(\bs{y})\in \calS_\ell^1(D)$. We can hence write 
\begin{equation}\label{eq:delta_psi}
  \delta_{N,\ell}(\bs{y}) = \sum_{i=1}^{n_\ell} \mathbf{u}_i^{(\ell)}(\bs{y}) \psi_{\ell,i}
\end{equation}
with $\mathbf{u}^{(\ell)}(\bs{y}) \in \R^{n_\ell}$. Let now $K_\ell \isdef \#\calK_{\bs{p}^{(\ell)}}$ and define $\mathbf{X}^{(\ell)}\in\R^{K_\ell \cdot n_\ell}$ as
\begin{equation}\label{eq:defX}
    \mathbf{X}_{(\bs{k},i)}^{(\ell)} \isdef \mathbf{u}_i^{(\ell)}(\bs{y}_{\bs{k}}^{(\ell)}),\qquad \bs{k}\in \calK_{\bs{p}^{(\ell)}}.
\end{equation}
In the following, we interpret the vector $\mathbf{X}^{(\ell)}$ as a tensor of order $N+1$ and size 
\[
(p_1^{(\ell)}+1)\times \cdots \times (p_N^{(\ell)}+1) \times n_\ell
\] 
and use low-rank tensor methods to construct a data-sparse approximation $\tilde{\mathbf{X}}^{(\ell)} \approx \mathbf{X}^{(\ell)}$.
In particular, we make use of the hierarchical tensor format introduced in \cite{HackKuehn09} and analyzed in \cite{Gras10}.

\subsection{Hierarchical Tensor Format}
In the following, we consider tensors $\mathbf{X}\in\R^\calJ$ of order $d\in\N$ over general product index sets $\calJ = \calJ_1\times\ldots\times\calJ_d$.
We first need the concept of the matricization of a tensor.
\begin{definition}
Let $\calD\isdef\{1,\ldots,d\}$. Given a subset $t\subset \calD$ with complement $[t]\isdef\calD\setminus t$, the \emph{matricization}
\[
    \calM_t : \R^\calJ \to \R^{\calJ_t} \otimes \R^{\calJ_{[t]}},\qquad \calJ_t\isdef\bigtimes_{i\in t} \calJ_i,\quad \calJ_{[t]}\isdef\bigtimes_{i\in [t]} \calJ_i,
\]
of a tensor $\mathbf{X} \in \R^\calJ$ is defined by its entries
\begin{equation*} 
    \calM_t(\mathbf{X})_{(j_i)_{i\in t}, (j_i)_{i\in [t]}} \isdef \mathbf{X}_{(j_1,\ldots,j_d)},\qquad (j_1,\ldots,j_d)\in\calJ.
\end{equation*}
\end{definition}
The subsets $t\subset \calD$ are organized in a binary \emph{dimension tree} $T_\calD$ with root $\calD$ such that
each node $t\in T_\calD$ is non-empty and each $t\in T_\calD$ with $\#t\geq 2$ is the disjoint union of its sons $t_1,t_2\in T_\calD$, cf. Figure \ref{fig:dimtree}.

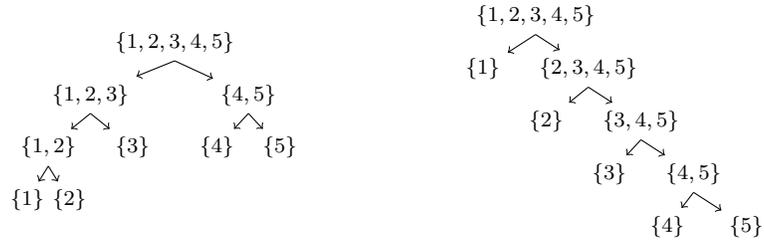
\begin{figure}[!ht]
\hspace{1cm}
\parbox{0.4\linewidth}{
\scalefont{0.75}
\centering
\begin{tikzpicture}[scale=0.7]
   \coordinate (T17) at (0,0);  
   \coordinate (T14) at (-1.6,-1);
   \coordinate (T57) at (1.4,-1);
   \coordinate (T12) at (-2.4,-2);
   \coordinate (T34) at (-0.8,-2);
   \coordinate (T56) at (0.8,-2);
   \coordinate (T7) at (2.0,-2);
   \coordinate (T1) at (-2.8,-3);
   \coordinate (T2) at (-2.0,-3);
   \coordinate (L0) at (-4,0);
   \coordinate (L1) at (-4,-1);
   \coordinate (L2) at (-4,-2);
   \coordinate (L3) at (-4,-3);
   \node (N17) at (T17) {$\{1,2,3,4,5\}$};
   \node (N14) at (T14) {$\{1,2,3\}$};
   \node (N57) at (T57) {$\{4,5\}$};
   \node (N12) at (T12) {$\{1,2\}$};
   \node (N34) at (T34) {$\{3\}$};
   \node (N56) at (T56) {$\{4\}$};
   \node (N7) at (T7) {$\{5\}$};
   \node (N1) at (T1) {$\{1\}$};
   \node (N2) at (T2) {$\{2\}$};
   \draw[->] (N17.south) -- (N14);
   \draw[->] (N17.south) -- (N57);  
   \draw[->] (N14.south) -- (N12);
   \draw[->] (N14.south) -- (N34);    
   \draw[->] (N57.south) -- (N56);  
   \draw[->] (N57.south) -- (N7);  
   \draw[->] (N12.south) -- (N1);
   \draw[->] (N12.south) -- (N2);
\end{tikzpicture}
}
\parbox{0.40\linewidth}{
\scalefont{0.75}
\centering
\begin{tikzpicture}[scale=0.7]
   \coordinate (T17) at (0,0);  
   \coordinate (T27) at (1.0,-1);
   \coordinate (T37) at (2.0,-2);
   \coordinate (T47) at (3.0,-3);
   \coordinate (T57) at (4.0,-4);
   \coordinate (T1) at (-1.0,-1);
   \coordinate (T2) at (0.2,-2);
   \coordinate (T3) at (1.4,-3);
   \coordinate (T4) at (2.5,-4);
   \coordinate (L0) at (6.5,0);
   \coordinate (L1) at (6.5,-1);
   \coordinate (L2) at (6.5,-2);
   \coordinate (L3) at (6.5,-3);
   \coordinate (L4) at (6.5,-4);
   \coordinate (L5) at (6.5,-5);
   \coordinate (L6) at (6.5,-6);
   \node (N17) at (T17) {$\{1,2,3,4,5\}$};
   \node (N27) at (T27) {$\{2,3,4,5\}$};
   \node (N37) at (T37) {$\{3,4,5\}$};
   \node (N47) at (T47) {$\{4,5\}$};
   \node (N57) at (T57) {$\{5\}$};
   \node (N1) at (T1) {$\{1\}$};
   \node (N2) at (T2) {$\{2\}$};
   \node (N3) at (T3) {$\{3\}$};
   \node (N4) at (T4) {$\{4\}$};
   \draw[->] (N17.south) -- (N1);
   \draw[->] (N17.south) -- (N27);  
   \draw[->] (N27.south) -- (N2);
   \draw[->] (N27.south) -- (N37);    
   \draw[->] (N37.south) -- (N3);  
   \draw[->] (N37.south) -- (N47);  
   \draw[->] (N47.south) -- (N4);
   \draw[->] (N47.south) -- (N57);
\end{tikzpicture}
}
\vspace{-0.2cm}
\caption{Dimension trees $T_\calD$ for $d=5$. Left: Balanced tree. Right: Linear tree.
\label{fig:dimtree}}
\end{figure}
%
%
\begin{definition}
\label{def:htensor}
Let $T_\calD$ be a dimension tree. 
The \emph{hierarchical rank} $\bs{r}\isdef(r_t)_{t\in T_\calD}$ of a tensor $\mathbf{X}\in\R^\calJ$ is defined by
\[
    r_t \isdef \operatorname{rank}\big(\calM_t(\mathbf{X})\big),\quad t\in T_\calD.
\]
For a given hierarchical rank $\bs{r}\isdef(r_t)_{t\in T_\calD}$, 
the \emph{hierarchical format} $\calH_{\bs{r}}$ is defined by
\[
     \calH_{\bs{r}} \isdef \big\{\mathbf{X}\in \R^\calJ : \operatorname{rank}\big(\calM_t(\mathbf{X})\big) \leq r_t,\, t\in T_\calD\big\}.
\]
\end{definition}
Given a tensor $\mathbf{X}\in\calH_{\bs{r}}$, Definition \ref{def:htensor} implies that for all $t\in T_\calD$ 
we can choose (orthogonal) matrices $\mathbf{U}_t \in \R^{\calJ_t\times r_t}$ 
such that $\operatorname{range}(\mathbf{U}_t) = \operatorname{range}\big(\calM_t(\mathbf{X})\big)$.
Moreover, for every non-leaf node $t \in T_\calD$ with sons $t_1,t_2\in T_\calD$, there exists a \emph{transfer tensor}
$\mathbf{B}_t\in\R^{r_t\times r_{t_1} \times r_{t_2}}$ such that
\begin{equation}\label{eq:nestedness}
  (\mathbf{U}_t)_{\cdot,s} = \sum_{s_1=1}^{r_{t_1}} \sum_{s_2 = 1}^{r_{t_2}} 
  (\mathbf{B}_t)_{(s,s_1,s_2)} (\mathbf{U}_{t_1})_{\cdot,s_1} \otimes (\mathbf{U}_{t_2})_{\cdot,s_2},\qquad s = 1,\ldots,r_t,
\end{equation}
where $(\mathbf{U}_t)_{\cdot,s}$ denotes the $s$th column of $\mathbf{U}_t$. 
At the root node $t=\calD$, we identify the tensor $\mathbf{X}$ with the column matrix $\mathbf{U}_\calD\in\R^{\calJ\times 1}$.

The recursive relation \eqref{eq:nestedness} is key to represent the tensor $\mathbf{X}$ compactly.
For all leaf nodes $t\in T_\calD$, we explicitly store the matrices $\mathbf{U}_t$, whereas for all inner nodes $t\in T_\calD$ only the 
transfers tensors $\mathbf{B}_t$ are required. The complexity for the hierarchical representation sums up to $\calO(dr^3 + drn)$, where $r\isdef r_\text{max} = \max_{t\in T_\calD} r_t$, $n\isdef\max_{i\in \calD} \#\calJ_i$. 
The \emph{effective rank} $r_\mathrm{eff}$ is the real positive number such that $(d-1)r_\mathrm{eff}^3 + dr_\mathrm{eff}n$ is the actual storage cost for a tensor in $\calH_{\bs{r}}$. 

In the multilevel scheme introduced above, the tensor $\mathbf{X}^{(\ell)}$ from \eqref{eq:defX} is defined via the 
numerical solution of the original PDE on levels $\ell$ and $\ell-1$ at all collocation points.
This means that an explicit computation of $\mathbf{X}^{(\ell)}$ in terms of all its entries would only be possible
for small length \(N\) of the Karhunen-Lo\`eve expansion and moderate polynomial degrees $\bs{p}^{(\ell)}$.
To overcome this limitation, we suggest to approximate $\mathbf{X}^{(\ell)}$ directly in the hierarchical tensor format $\calH_{\bs{r}}$
by the so-called \emph{cross approximation} technique introduced in \cite{BalGrasKlug2010}.

\subsection{Cross Approximation Technique}
The main idea of tensor cross approximation is to exploit the inherent low-rank structure directly by the evaluation of
a (small) number of well-chosen tensor entries. Prior numerical experiments indicate that cross approximation 
works particularly well for tensors of small size in each direction $i=1,\ldots,d$.
Considering the tensor $\mathbf{X} = \mathbf{X}^{(\ell)}$ from \eqref{eq:defX}, we observe that the size $n_\ell$ in direction $d=N+1$
becomes rather large for higher levels $\ell$ such that the cross approximation technique cannot be applied directly. We therefore use the following 
variant consisting of three steps:
\begin{description}
\item[Step 1.] Find an (orthogonal) matrix $\mathbf{V}\in\R^{n_\ell \times r_d}$ such that
\[
    \calM_{\{d\}}(\mathbf{X}) \approx \mathbf{V}\mathbf{V}^\top \calM_{\{d\}}(\mathbf{X}).
\]
\item[Step 2.]  Define a tensor $\mathbf{Y}\in\R^{\calJ^\prime}$ with $\calJ^\prime \isdef \calJ_{\{1,\ldots,d-1\}}\times \{1,\ldots,r_d\}$ via
\begin{equation}\label{eq:defY}
    \calM_{\{d\}}(\mathbf{Y}) = \mathbf{V}^\top \calM_{\{d\}}(\mathbf{X}).
\end{equation}
and use cross approximation to find $\tilde{\mathbf{Y}} \approx \mathbf{Y}$.
\item[Step 3.]  Build the final approximation $\tilde{\mathbf{X}}$ from
\begin{equation}\label{eq:defXtilde}
    \calM_{\{d\}}(\tilde{\mathbf{X}}) = \mathbf{V} \calM_{\{d\}}(\tilde{\mathbf{Y}}).
\end{equation}
\end{description}
The advantage of applying the cross approximation technique to the tensor $\mathbf{Y}$ instead of $\mathbf{X}$ lies in the reduced size
in direction $d=N+1$ for which we expect $r_d \ll n_\ell$. We now describe in more detail how the three approximation steps are carried out.

In \textbf{\textsf{Step 1}}, our aim is to construct an (approximate) basis $\mathbf{V}$ of the column space of $\calM_{\{d\}}(\mathbf{X})$.
To this end, we use the greedy strategy from Algorithm \ref{alg:columnspace}
over a subset $\calJ_\mathrm{train} \subset \calJ_{\{1,\ldots,d-1\}}$ of column indices.
\begin{algorithm}[!ht]
\caption{Find column basis $\mathbf{V}$ of $\calM_{\{d\}}(\mathbf{X})$}
\label{alg:columnspace}
\begin{algorithmic}[1]
\STATE{$\mathbf{V}\isdef[\,]$}
\REPEAT
\STATE{$\bs{j}^* \isdef \arg\max_{\bs{j}\in\calJ_\mathrm{train}} \big\| (\mathbf{I}-\mathbf{V}\mathbf{V}^\top)\calM_{\{d\}}(\mathbf{X})_{\cdot,\bs{j}}\big\|_2$} \label{ln:line3}
\STATE{$\mathbf{V} \isdef \text{orth}[\mathbf{V}, \calM_{\{d\}}(\mathbf{X})_{\cdot,\bs{j}^*}]$} 
\UNTIL{$\big\| (\mathbf{I}-\mathbf{V}\mathbf{V}^\top)\calM_{\{d\}}(\mathbf{X})_{\cdot,\bs{j}^*} \big\|_2 \leq \varepsilon$} 
\end{algorithmic}
\end{algorithm}
To construct the training set $\calJ_\mathrm{train}$, we use the following strategy known from tensor cross approximation~\cite[Sec. 3.5]{GrasKresTob13}. 
Starting with a random index $\bs{j}\in \calJ_{\{1,\ldots,d-1\}}$, we consider the set
\begin{equation}\label{eq:cross_idx}
    \calJ_\text{cross}(\bs{j}) \isdef \{ (j_1,\ldots,j_{i-1},k,j_{i+1},\ldots,j_{d-1}): k\in\calJ_i,\, i = 1,\ldots,d-1\} 
\end{equation}
which forms a 'cross' with center $\bs{j}$. 
Repeating this strategy a few number $s$ of times (say $s=3$) for random indices $\bs{j}^1,\ldots,\bs{j}^s\in\calJ_{\{1,\ldots,d-1\}}$, we arrive at 
\[
    \calJ_\mathrm{train} \isdef \calJ_\mathrm{cross}(\bs{j}^1)\cup\ldots\cup \calJ_\mathrm{cross}(\bs{j}^s),
\]    
which determines the training set for the first loop of Algorithm~\ref{alg:columnspace}:
In every subsequent loop of Algorithm~\ref{alg:columnspace}, this set is enriched with $s$ additional (random) crosses.
In line~\ref{ln:line3}, we reuse the information computed in the previous loops of the algorithm as much as possible.

Once the matrix $\mathbf{V}$ is constructed, our next aim in \textbf{\textsf{Step 2}} is to approximate the tensor $\mathbf{Y}\in\R^{\calJ^\prime}$ from \eqref{eq:defY}
in the hierarchical tensor format $\calH_{\bs{r}}$. Recalling the main idea of the approach in \cite{BalGrasKlug2010},
we seek to recursively approximate the matricizations of $\mathbf{M} = \calM_t(\mathbf{Y})$
at any node $t\in T_\calD$ by a so-called \emph{cross approximation} of the form
\begin{equation}\label{eq:cross}
    \mathbf{M} \approx \tilde{\mathbf{M}} \isdef \mathbf{M}\rvert_{\calJ_t^\prime\times\calC_t} \cdot \mathbf{M}\rvert_{\calR_t\times\calC_t}^{-1} \cdot \mathbf{M}\rvert_{\calR_t\times \calJ_{[t]}^\prime}
\end{equation}
with $\op{rank}(\tilde{\mathbf{M}}) = r_t$ and pivot sets $\calR_t \subset \calJ_t^\prime$, $\calC_t \subset \calJ_{[t]}^\prime$ of size $r_t$.
For each node $t\in T_\calD$, the rank $r_t$ can be chosen adaptively in order to reach a given (heuristic) target accuracy 
$\varepsilon_\text{ten}\geq 0$ such that $\|\mathbf{M}-\tilde{\mathbf{M}}\|_2 \approx \varepsilon_\text{ten} \|\mathbf{M}\|_2$.

The matrices $\mathbf{M}\rvert_{\calJ_t^\prime\times\calC_t}, \mathbf{M}\rvert_{\calR_t\times \calJ_{[t]}^\prime}$ in~\eqref{eq:cross} are never formed
explicitly. The essential information for the construction of $\mathbf{Y}\in\calH_{\bs{r}}$ with $\bs{r}=(r_t)_{t\in T_\calD}$
are condensed in the pivot sets $\calR_t,\calC_t$ and the matrices $\mathbf{M}\rvert_{\calR_t\times\calC_t}\in\R^{r_t\times r_t}$ from \eqref{eq:cross}. 
This construction is explicit in the sense that the necessary transfer tensors $\mathbf{B}_t$ for all inner nodes $t\in T_\calD$ and the matrices $\mathbf{U}_t$ in the
leaf nodes $t\in T_\calD$ are directly determined by the values of $\mathbf{Y}$ at certain entries defined by the pivots sets.
The details of this procedure can be found in \cite{BalGrasKlug2010,Ha2012}.

%
%
%
After the cross approximation has been performed, \textbf{\textsf{Step 3}} involves no further approximation but only a simple matrix-matrix product.
Assume that the tensor $\mathbf{Y}$ has been approximated by $\tilde{\mathbf{Y}}$ represented in $\calH_{\bs{r}}$ by means of transfer tensors
$\mathbf{B}_t$ for inner nodes $t\in T_\calD$ and matrices $\mathbf{U}_t$ for leaf nodes $t\in T_\calD$.
In the node $t=\{d\}$, we now compute the matrix $\mathbf{U}_t^\prime \isdef \mathbf{V}\mathbf{U}_t$, whereas for all other leaf nodes $t\in T_\calD$
we keep $\mathbf{U}_t^\prime \isdef \mathbf{U}_t$. It turns out that the tensor $\tilde{\mathbf{X}}$ from \eqref{eq:defXtilde} is then represented by
the transfer tensors $\mathbf{B}_t$ and the matrices $\mathbf{U}_t^\prime$.

\subsection{Error Analysis}
We now study the effect of a perturbed multilevel approximation introduced through tensor approximations $\tilde{\mathbf{X}}^{(\ell)} \approx \mathbf{X}^{(\ell)}$.
In particular our aim is to derive an indication from Lemma \ref{lem:perturbed_ML} for the required accuracy in the tensor approximation in order to maintain the convergence result for the multilevel scheme.

Thanks to the orthogonality of the basis $\{\psi_{\ell,i}\}$ in \eqref{eq:delta_psi}, we immediately derive from \eqref{eq:defX} that
\[
  \norm{\delta_{N,\ell}(\bs{y}_{\bs{k}}^{(\ell)})}_{H_0^1(D)} = \norm{\mathbf{X}^{(\ell)}_{(\bs{k},\cdot)}}_2,\qquad \bs{k}\in\calK_{\bs{p}^{(\ell)}}.
\]
In order to apply Lemma \ref{lem:perturbed_ML}, we need to ensure that
\[
  \norm{\tilde{\mathbf{X}}^{(\ell)}_{(\bs{k},\cdot)}-\mathbf{X}^{(\ell)}_{(\bs{k},\cdot)}}_2 \lesssim 2^{-L},\qquad \bs{k}\in\calK_{\bs{p}^{(\ell)}}.
\]
Noting that $\norm{\delta_{N,\ell}(\bs{y}_{\bs{k}}^{(\ell)})}_{H_0^1(D)}\lesssim 2^{-\ell}$, this can be guaranteed if we require
\[
  \norm{\tilde{\mathbf{X}}^{(\ell)}_{(\bs{k},\cdot)}-\mathbf{X}^{(\ell)}_{(\bs{k},\cdot)}}_2 \lesssim
  2^{\ell-L} \norm{\mathbf{X}^{(\ell)}_{(\bs{k},\cdot)}}_2,\qquad \bs{k}\in\calK_{\bs{p}^{(\ell)}}.
\]
This motivates to perform the tensor approximation with a relative accuracy of $\varepsilon_\ell \sim 2^{\ell-L}$ such that
\begin{equation}\label{eq:tensor_acc}
    \norm{\tilde{\mathbf{X}}^{(\ell)} - \mathbf{X}^{(\ell)}}_2 \lesssim \varepsilon_\ell\norm{\mathbf{X}^{(\ell)}}_2.
\end{equation}
As a consequence, the tensor approximation for higher levels $\ell$ needs to be done less accurate.

\subsection{Final Algorithm}
Compiling all the results obtained so far, our final strategy is summarized in Algorithm \ref{alg:multilevel}.

\begin{algorithm}[!ht]
\caption{Multilevel tensor approximation \emph{ML-Tensor}}
\label{alg:multilevel}
\begin{algorithmic}[1]
\FOR{$\ell=0,\ldots,L$}
\STATE{Choose $h_\ell \sim 2^{-\ell}$, $p^{(\ell)} \sim L-\ell$, $\varepsilon_\ell \sim 2^{\ell-L}$}
\STATE{Approximate $\tilde{\mathbf{X}}^\ell \approx\mathbf{X}^\ell$ from \eqref{eq:defX} using Steps 1,2,3 with relative accuracy $\varepsilon_\ell$}
\ENDFOR
\RETURN{Multilevel approximation $\tilde{u}_L^{\mathrm{ML}}$ from \eqref{eq:perturbed_ML}}
\end{algorithmic}
\end{algorithm}

\section{Numerical Experiments} \label{sec:experiments}

In the numerical experiments, we consider the parametric diffusion problem on the unit square given by
\begin{align*}
  -\nabla\cdot\big(a(\bs{y})\nabla u(\bs{y})\big) &= 1,\quad \, \mbox{in}\,\, D = (0,1)^2,\\
  u(\bs{y}) &= 0,\quad\, \mbox{on}\,\, \partial D.
\end{align*}
On each level $\ell$ of the proposed multilevel scheme, the domain $D$ is discretized by a uniform triangulation 
with mesh size
\[
    h_\ell = 2^{-\ell}h_0,\qquad h_0 = 1/4,
\]
using \(\mathcal{Q}_1\), i.e., bilinear finite elements with $n_\ell$ degrees of freedom.

To construct the interpolation operator $\calI_{\bs{p}^{(\ell)}}$ from \eqref{eq:ipol_def}, we use an isotropic polynomial
degree on each level defined by
\[
    \bs{p}^{(\ell)} = (p^{(\ell)},\ldots,p^{(\ell)})\in\N_0^N,\qquad p^{(\ell)}\isdef \lfloor(L-\ell+1)/2\rfloor.
\]
This means that possible anisotropies induced by the decay of the Karhunen-Lo\`eve expansion are not considered here.
The interpolation points $\bs{y}_{\bs{k}} \in \Gamma = [-1,1]^N$ are given by the tensorized
roots of the Chebyshev polynomials of the first kind of degree $p^{(\ell)}+1$.
The accuracy for the tensor approximation from \eqref{eq:tensor_acc} on each level is chosen as
\[
    \varepsilon_\ell = 2^{\ell-L}\varepsilon_0,\qquad \varepsilon_0 = 1/4.
\]    
For each level $\ell$, we report the effective rank $r_\mathrm{eff}$ and the maximal rank $r_\mathrm{max}$ of the 
approximate tensor $\mathbf{X}^{(\ell)}$ represented in the hierarchical tensor format $\calH_{\bs{r}}$.
In addition, we state the number of tensor evaluations for \textbf{\textsf{Step 1}} and \textbf{\textsf{Step 2}} during the cross approximation procedure of the tensor $\mathbf{X}^{(\ell)}$.
Note that each evaluation on level $\ell$ may require the solution of the PDE on level $\ell$ and level $\ell-1$.

To measure the interpolation error, we randomly choose $M=100$ parameters $\bs{y}^{i}\in\Gamma$ and compute
\[
    \varepsilon^{\operatorname{M\!L}}_L [u]\isdef \left(\sum_{i=1}^M \norm{\tilde{u}_{N,L}^{\operatorname{M\!L}}(\bs{y}^{i}) - u_{N,L}(\bs{y}^{i})}_{H_0^1(D)}^2 \bigg/ \sum_{i=1}^M \norm{u_{N,L}(\bs{y}^{i})}_{H_0^1(D)}^2\right)^{1/2}.
\]
To study the impact of the different levels, we also compute for the perturbed differences $\tilde{\delta}_{N,\ell}$ the error
\[
    \varepsilon^{(\ell)}_L [u]\isdef \left(\sum_{i=1}^M \norm{\calI_{\bs{p}^{(\ell)}}[\tilde{\delta}_{N,\ell}](\bs{y}^{i})}_{H_0^1(D)}^2 \bigg/ \sum_{i=1}^M \norm{u_{N,L}(\bs{y}^{i})}_{H_0^1(D)}^2\right)^{1/2}.
\]
For a uniform distribution of $y_n \sim \calU([-1,1])$, $n=1,\ldots,N$, we evaluate the expected value of the multilevel solution and compute
\[
    \varepsilon^\mathbb{E}_L [u]\isdef \norm{\mathbb{E}\left[\tilde{u}_{N,L}^{\operatorname{M\!L}}\right]-\mathbb{E}[u_\mathrm{ref}]}_{H_0^1(D)} \Big/ \norm{\mathbb{E}[u_\mathrm{ref}]}_{H_0^1(D)},
\]
where $u_\mathrm{ref}$ is the reference solution obtained from the multilevel scheme on the highest level $L=7$.

From the multilevel solution, we can immediately compute approximations to output functionals, as, e.g., for
\[
  \psi(u)]\isdef \int_D u\de x.
\] 
Analogous to the errors for the solution, we then obtain relative errors for the output functional and for the expected value as
\[
    \varepsilon^\mathbb{E}_L [\psi] \isdef \left|\mathbb{E}\left[\psi\left(\tilde{u}_{N,L}^{\operatorname{M\!L}}\right)\right]-\mathbb{E}[\psi(u_\mathrm{ref})]\right| \Big/ \big|\mathbb{E}[\psi(u_\mathrm{ref})]\big|.
\]

All numerical experiments have been carried out on a quad-core Intel(R) Xeon(R) CPU E31225 with 3.10GHz. 
The timings spent on each level $\ell$ are CPU times for a single core. For the finite element approximation, we have used the software library deal.II, see \cite{BaHaKa2007}.
All sparse linear systems have been solved by a multifrontal solver from UMFPACK.

\subsection{Karhunen-Lo\`eve expansion with exponential decay}
In the first experiment, the Karhunen-Lo\`eve expansion of the diffusion coefficient is given by
\begin{equation}\label{eq:KL1}
    a(\bs{y},x) = 2 + \sum_{n=1}^N \sqrt{\lambda_n} b_n(x) y_n
\end{equation}
with
\[
   b_n(x) = \sin(2\pi n x_1) \sin(2\pi n x_2).
\]
We consider an exponential decay of the eigenvalues defined by $\lambda_n \isdef \operatorname{exp}(-n)$.
The results of this experiment for $N=10,20$ can be found in Table \ref{tab:exp_error} and Figure \ref{fig:exp_error}.
\begin{table}[!ht]
\centering
\scalefont{0.7}
\begin{tabular}{c|c|c|r|r|r|r|r|r|c}
$N$ & $\ell$ & $p^{(\ell)}$  & $n_\ell$ &  $r_\mathrm{eff}$ &   $r_\mathrm{max}$  & step 1 & step 2 & time[s] & $\varepsilon^{(\ell)}_L$ \\ \hline
 10 &     0 &   4  &      25 &    2.12 &    4 &  247 &    3802 &     1.7 &  3.99e-04\\       
    &     1 &   3  &      81 &    5.48 &   15 &  187 &    7129 &    11.0 &  4.24e-04\\       
   &      2 &   3  &     289 &   11.62 &   52 &  559 &   13839 &    72.2 &  6.07e-04\\       
   &      3 &   2  &    1089 &   14.71 &   79 &  262 &    8011 &   160.9 &  1.05e-03\\       
   &      4 &   2  &    4225 &   13.49 &   71 &  340 &    7562 &   554.0 &  6.93e-04\\       
   &      5 &   1  &   16641 &    7.61 &   32 &  225 &     879 &   305.5 &  1.13e-03\\       
   &      6 &   1  &   66049 &    5.59 &   20 &  179 &     775 &  1144.8 &  6.58e-04\\       
   &      7 &   0  &  263169 &    1.00 &    1 &    1 &       1 &    11.9 &  1.92e-03\\ \hline        
 20  &    0 &   4  &      25 &    1.89 &    4 &  487 &   12829 &     7.1 &  2.71e-04\\        
   &      1 &   3  &      81 &    4.47 &   16 &  367 &   22680 &    48.7 &  4.20e-04\\        
  &       2 &   3  &     289 &    9.96 &   56 & 1099 &   48864 &   360.6 &  4.39e-04\\        
  &       3 &   2  &    1089 &   12.72 &   87 &  422 &   40206 &  1135.6 &  1.08e-03\\        
  &       4 &   2  &    4225 &   11.74 &   80 &  516 &   35244 &  3662.6 &  8.16e-04\\        
  &       5 &   1  &   16641 &    6.90 &   39 &  442 &    8983 &  3836.8 &  1.06e-03\\        
  &       6 &   1  &   66049 &    4.95 &   24 &  316 &    5870 & 10698.2 &  6.93e-04\\        
  &       7 &   0  &  263169 &    1.00 &    1 &    1 &       1 &    16.2 &  1.84e-03\\      
\end{tabular}
\caption{Karhunen-Lo\`eve expansion with exponential decay: Multilevel approximation for $L=7$ with the number of tensor evaluations for Step 1 and Step 2 and the time spent on each level.}
\label{tab:exp_error}
\end{table}
From the last column of Table~\ref{tab:exp_error}, it can be seen that our adaptive choice of polynomial degree and hierarchical ranks successfully equilibrate the error on the different finite element levels. The hierarchical ranks increase initially and then decrease again as the level increases. This decrease is the most important feature of our approach; it significantly reduces the cost, in terms of queries to the solution, on the finer levels and the overall solution process. Figure~\ref{fig:exp_error} shows that the error decreases proportionally with $h$ as the maximum number of levels increases, as expected from our error estimates.

\begin{figure}[!ht]
\parbox{0.49\linewidth}{\centering\includegraphics[scale=0.4]{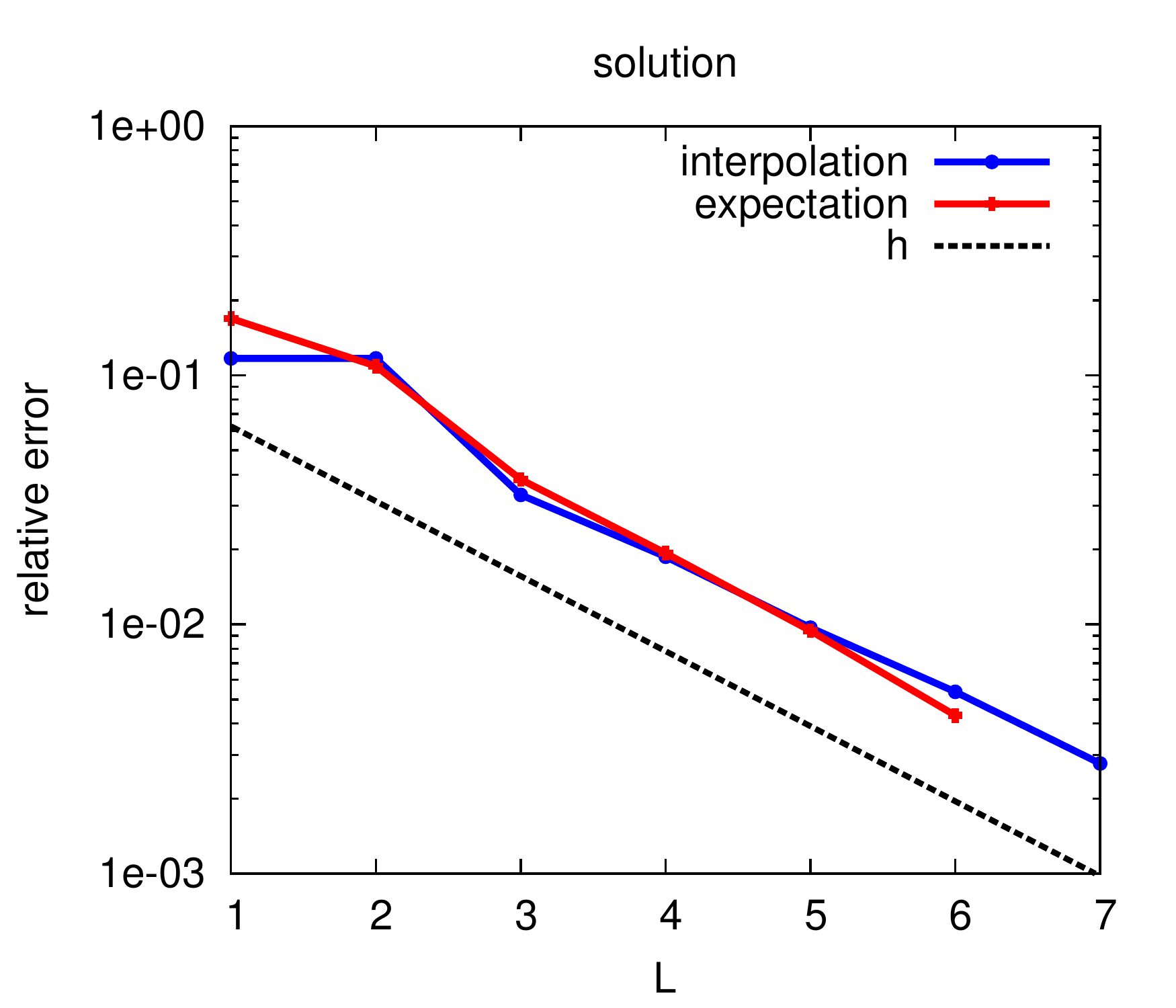}}
\parbox{0.49\linewidth}{\centering\includegraphics[scale=0.4]{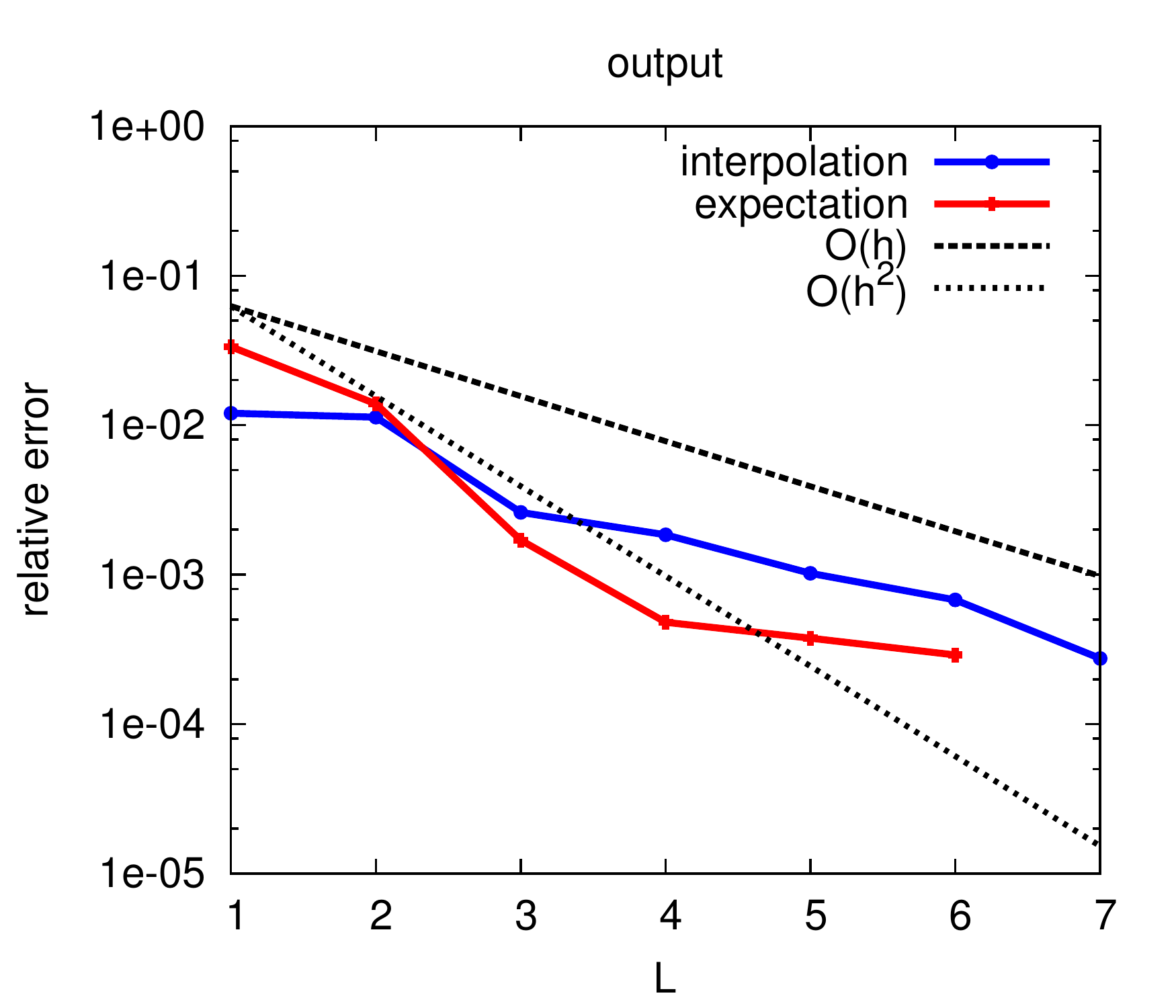}}
\caption{Karhunen-Lo\`eve expansion with exponential decay for $N=10$. Left: errors $\varepsilon^\mathrm{ML}_L [u]$ and $\varepsilon^\mathbb{E}_L [u]$ for the solution. 
Right: errors $\varepsilon^\mathrm{ML}_L [\psi]$ and $\varepsilon^\mathbb{E}_L [\psi]$ for the output functional.}
\label{fig:exp_error}
\end{figure}

\subsection{Karhunen-Lo\`eve expansion with fast algebraic decay}
In this experiment, the diffusion coefficient is again given by \eqref{eq:KL1}. 
We consider an algebraic decay of the eigenvalues defined by $\lambda_n \isdef 1/n^4$.
The results of this experiment for $N=10,20$ can be found in Table \ref{tab:alg_fast_error} and Figure \ref{fig:alg_fast_error}. 

\begin{table}[!ht]
\centering
\scalefont{0.7}
\begin{tabular}{c|c|c|r|r|r|r|r|r|c}
$N$ & $\ell$ & $p^{(\ell)}$  & $n_\ell$ &  $r_\mathrm{eff}$ &   $r_\mathrm{max}$  & step 1 & step 2 & time[s] & $\varepsilon^{(\ell)}_L$ \\ \hline
 10 &       0   &   4 &        25 &     1.84 &     4 &    247 &     2631 &      1.2 &  4.20e-04 \\         
    &       1   &   3 &        81 &     4.41 &    13 &    280 &     4492 &      7.2 &  4.42e-04 \\         
   &        2   &   3 &       289 &     6.87 &    28 &    559 &     5389 &     29.7 &  4.50e-04 \\         
   &        3   &   2 &      1089 &     7.49 &    31 &    378 &     3323 &     70.9 &  7.15e-04 \\         
   &        4   &   2 &      4225 &     6.34 &    23 &    378 &     2647 &    208.1 &  5.13e-04 \\         
   &        5   &   1 &     16641 &     4.60 &    14 &     67 &      814 &    239.2 &  6.98e-04 \\         
   &        6   &   1 &     66049 &     3.62 &    10 &    133 &      732 &   1004.6 &  5.53e-04 \\         
   &        7   &   0 &    263169 &     1.00 &     1 &      1 &        1 &     11.8 &  1.05e-03 \\ \hline  
 20  &      0   &   4 &        25 &     1.71 &     4 &    487 &     9458 &      5.3 &  3.51e-04 \\         
   &        1   &   3 &        81 &     3.66 &    14 &    367 &    16161 &     35.0 &  4.78e-04 \\         
  &         2   &   3 &       289 &     6.17 &    32 &   1099 &    26954 &    202.6 &  5.18e-04 \\         
  &         3   &   2 &      1089 &     7.93 &    46 &    862 &    21461 &    626.7 &  6.66e-04 \\         
  &         4   &   2 &      4225 &     7.96 &    43 &    862 &    32550 &   3454.5 &  4.69e-04 \\         
  &         5   &   1 &     16641 &     5.41 &    22 &    316 &    13526 &   5634.6 &  8.01e-04 \\         
  &         6   &   1 &     66049 &     3.65 &    13 &    379 &     6952 &  12580.2 &  5.52e-04 \\         
  &         7   &   0 &    263169 &     1.00 &     1 &      1 &        1 &     16.1 &  1.16e-03 \\        
\end{tabular}
\caption{Karhunen-Lo\`eve expansion with fast algebraic decay: Multilevel approximation for $L=7$ with the number of tensor evaluations for Step 1 and Step 2 and the time spent on each level}
\label{tab:alg_fast_error}
\end{table} 

\begin{figure}[!ht]
\parbox{0.49\linewidth}{\centering\includegraphics[scale=0.4]{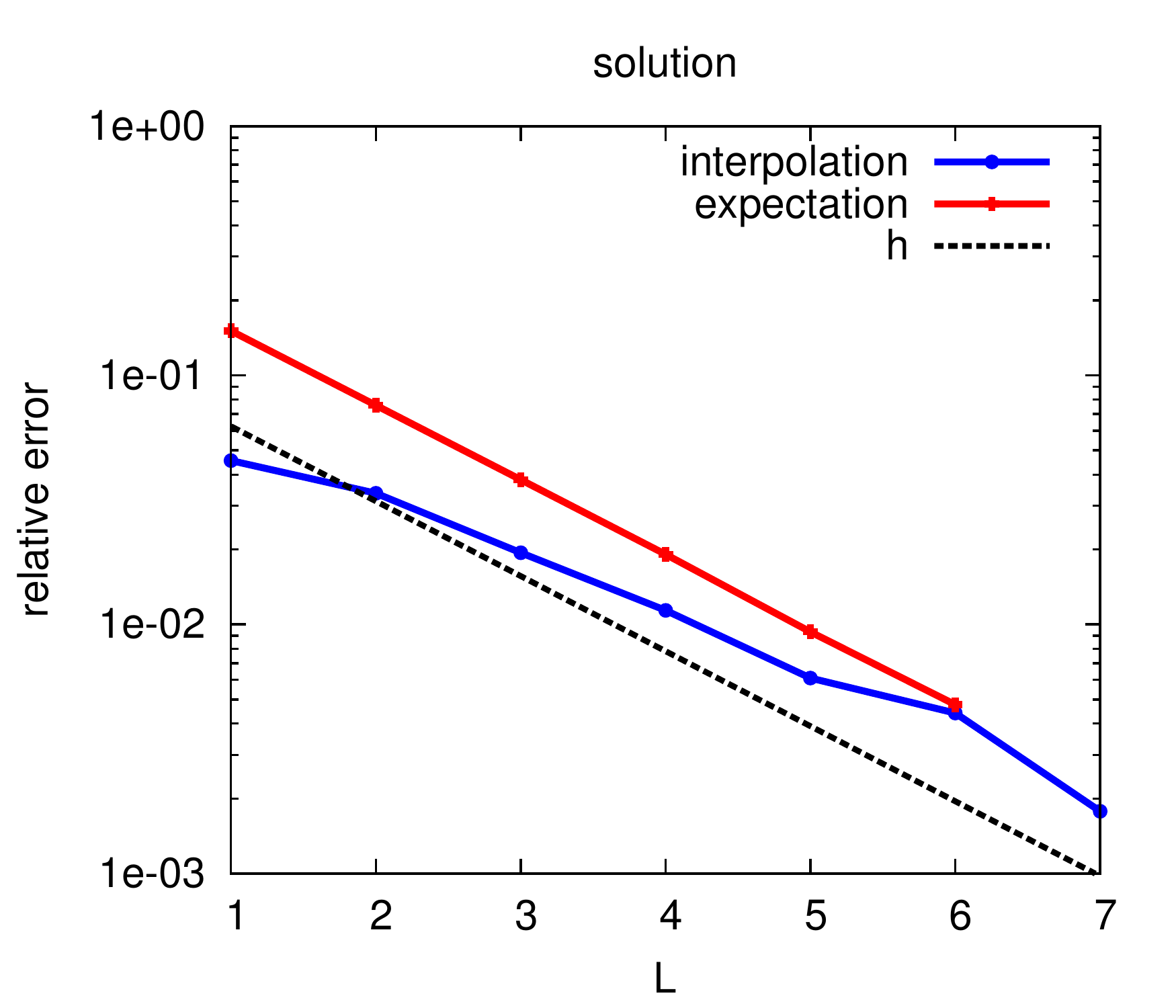}}
\parbox{0.49\linewidth}{\centering\includegraphics[scale=0.4]{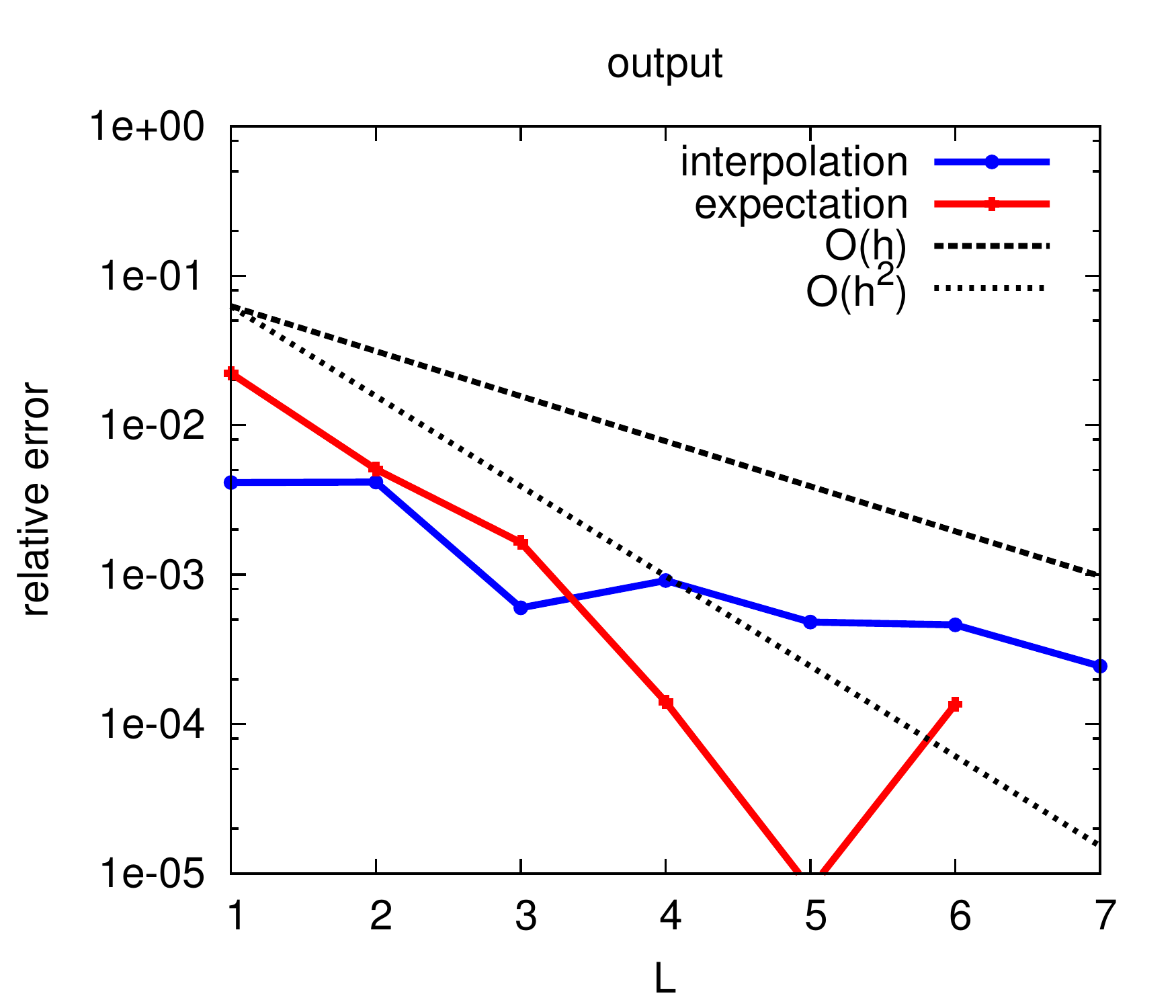}}
\caption{Karhunen-Lo\`eve expansion with fast algebraic decay for $N=10$.  Left: errors $\varepsilon^\mathrm{ML}_L [u]$ and $\varepsilon^\mathbb{E}_L [u]$ for the solution. 
Right: errors $\varepsilon^\mathrm{ML}_L [\psi]$ and $\varepsilon^\mathbb{E}_L [\psi]$ for the output functional.}
\label{fig:alg_fast_error}
\end{figure}

\subsection{Karhunen-Lo\`eve expansion with slow algebraic decay}

In this experiment, the diffusion coefficient is also given by \eqref{eq:KL1}. 
We consider an algebraic decay of the eigenvalues defined by $\lambda_n \isdef 1/n^2$.
The results of this experiment for $N=10$ can be found in Table \ref{tab:alg_slow_error} and Figure \ref{fig:alg_slow_error}. As expected, the maximal hierarchical rank becomes significantly higher compared to the faster algebraic decay. 

\begin{table}[!ht]
\centering
\scalefont{0.7}
\begin{tabular}{c|c|c|r|r|r|r|r|r|c}
$N$ & $\ell$ & $p^{(\ell)}$  & $n_\ell$ &  $r_\mathrm{eff}$ &   $r_\mathrm{max}$  & step 1 & step 2 & time[s] & $\varepsilon^{(\ell)}_L$ \\ \hline
10 &      0 &   4 &       25 &     2.36 &     4 &     247 &     4153 &      1.8  &   5.87e-04\\  
   &      1 &   3 &       81 &     5.82 &    16 &     187 &     9050 &     13.9  &   4.37e-04\\  
  &       2 &   3 &      289 &    13.64 &    58 &     373 &    19729 &    101.3  &   6.28e-04\\  
  &       3 &   2 &     1089 &    18.69 &    95 &     211 &    13083 &    259.8  &   2.35e-03\\  
  &       4 &   2 &     4225 &    17.63 &    93 &     209 &    11718 &    828.7  &   1.88e-03\\  
  &       5 &   1 &    16641 &    12.46 &    67 &     153 &     1012 &    330.0  &   1.92e-03\\  
  &       6 &   1 &    66049 &     9.14 &    41 &     323 &      961 &   1578.5  &   8.59e-04\\  
  &       7 &   0 &   263169 &     1.00 &     1 &       1 &        1 &     12.2  &   2.51e-03\\
\end{tabular}
\caption{Karhunen-Lo\`eve expansion with slow algebraic decay: Multilevel approximation for $L=7$ with the number of tensor evaluations for Step 1 and Step 2 and the time spent on each level}
\label{tab:alg_slow_error}
\end{table} 

\begin{figure}[!ht]
\parbox{0.49\linewidth}{\centering\includegraphics[scale=0.4]{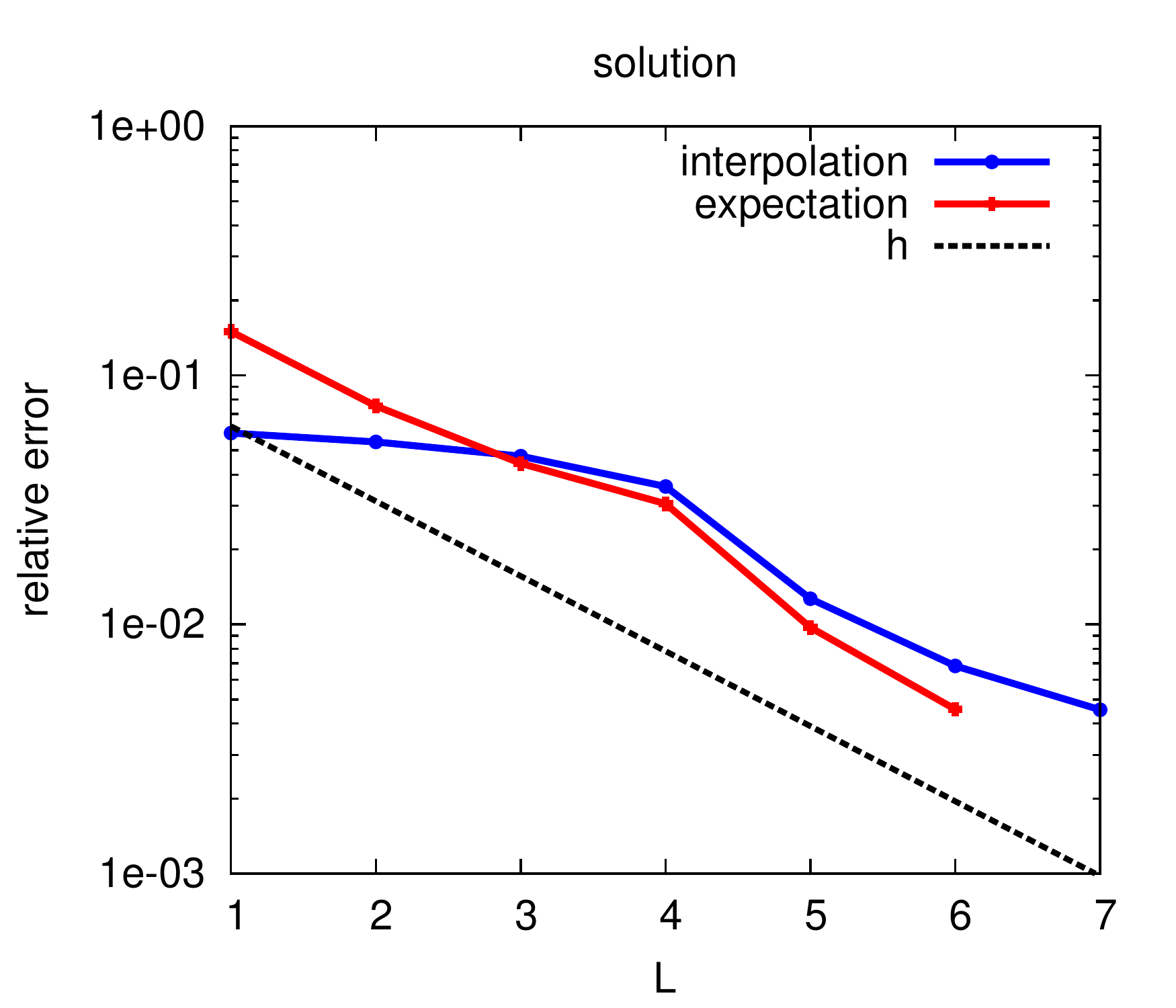}}
\parbox{0.49\linewidth}{\centering\includegraphics[scale=0.4]{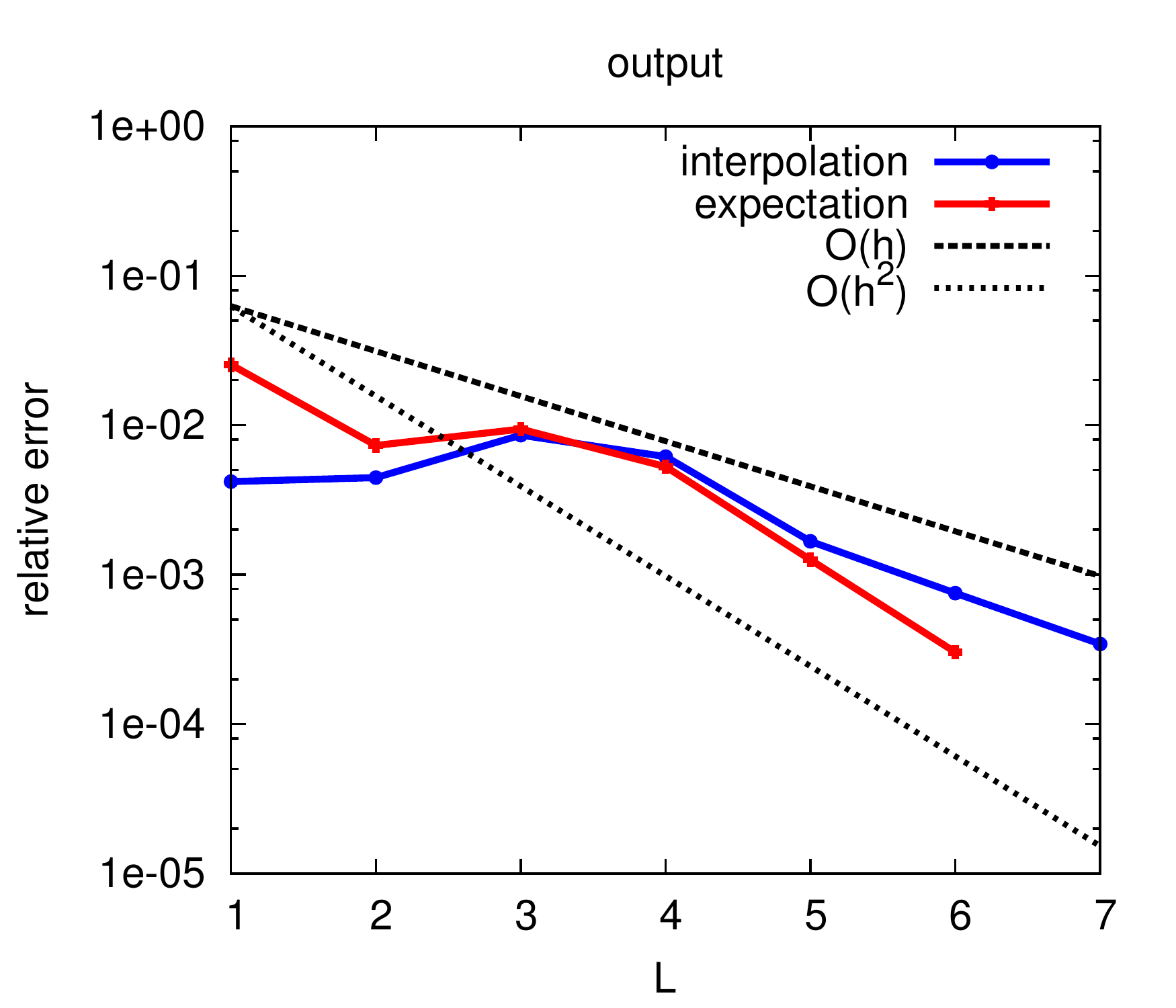}}
\caption{Karhunen-Lo\`eve expansion with slow algebraic decay for $N=10$. Left: errors $\varepsilon^\mathrm{ML}_L [u]$ and $\varepsilon^\mathbb{E}_L [u]$ for the solution. 
Right: errors $\varepsilon^\mathrm{ML}_L [\psi]$ and $\varepsilon^\mathbb{E}_L [\psi]$ for the output functional.}
\label{fig:alg_slow_error}
\end{figure}

\subsection{Log-uniform case}
Finally, to demonstrate that our approach does not depend on an affine linear decomposition of the diffusion coefficient with respect to the parameters,
we consider
\begin{equation*}
    a(\bs{y},x) = \exp\left( \sum_{n=1}^N \sqrt{\lambda_n} b_n(x) y_n \right),
\end{equation*}
with an algebraic decay defined by $\lambda_n \isdef 1/n^2$.
The results of this experiment for $N=10$ can be found in Table \ref{tab:loguniform_error} and Figure \ref{fig:loguniform_error}.

\begin{table}[!ht]
\centering
\scalefont{0.7}
\begin{tabular}{c|c|c|r|r|r|r|r|r|c}
$N$ & $\ell$ & $p^{(\ell)}$  & $n_\ell$ &  $r_\mathrm{eff}$ &   $r_\mathrm{max}$  & step 1 & step 2 & time[s] & $\varepsilon^{(\ell)}_L$ \\ \hline
10 &    0&   4 &       25  &    4.85 &    11 &    247 &    31106 &     13.0 & 6.40e-04  \\ 
   &    1&   3 &       81  &    9.65 &    28 &    187 &    36430 &     56.4 & 7.60e-04  \\ 
  &     2&   3 &      289  &   15.37 &    60 &    373 &    51920 &    270.3 & 1.03e-03  \\ 
  &     3&   2 &     1089  &   17.10 &    90 &    211 &    19827 &    402.4 & 1.80e-03  \\ 
  &     4&   2 &     4225  &   15.20 &    80 &    222 &    13206 &    975.7 & 1.50e-03  \\ 
  &     5&   1 &    16641  &    9.96 &    46 &    182 &     1001 &    341.0 & 2.24e-03  \\ 
  &     6&   1 &    66049  &    7.34 &    28 &    118 &      941 &   1299.9 & 1.42e-03  \\ 
  &     7&   0 &   263169  &    1.00 &     1 &      1 &        1 &     12.4 & 4.64e-03  \\
\end{tabular}
\caption{Log-uniform case: Multilevel approximation for $L=7$ with the number of tensor evaluations for Step 1 and Step 2 and the time spent on each level}
\label{tab:loguniform_error}
\end{table} 

\begin{figure}[!ht]
\parbox{0.49\linewidth}{\centering\includegraphics[scale=0.4]{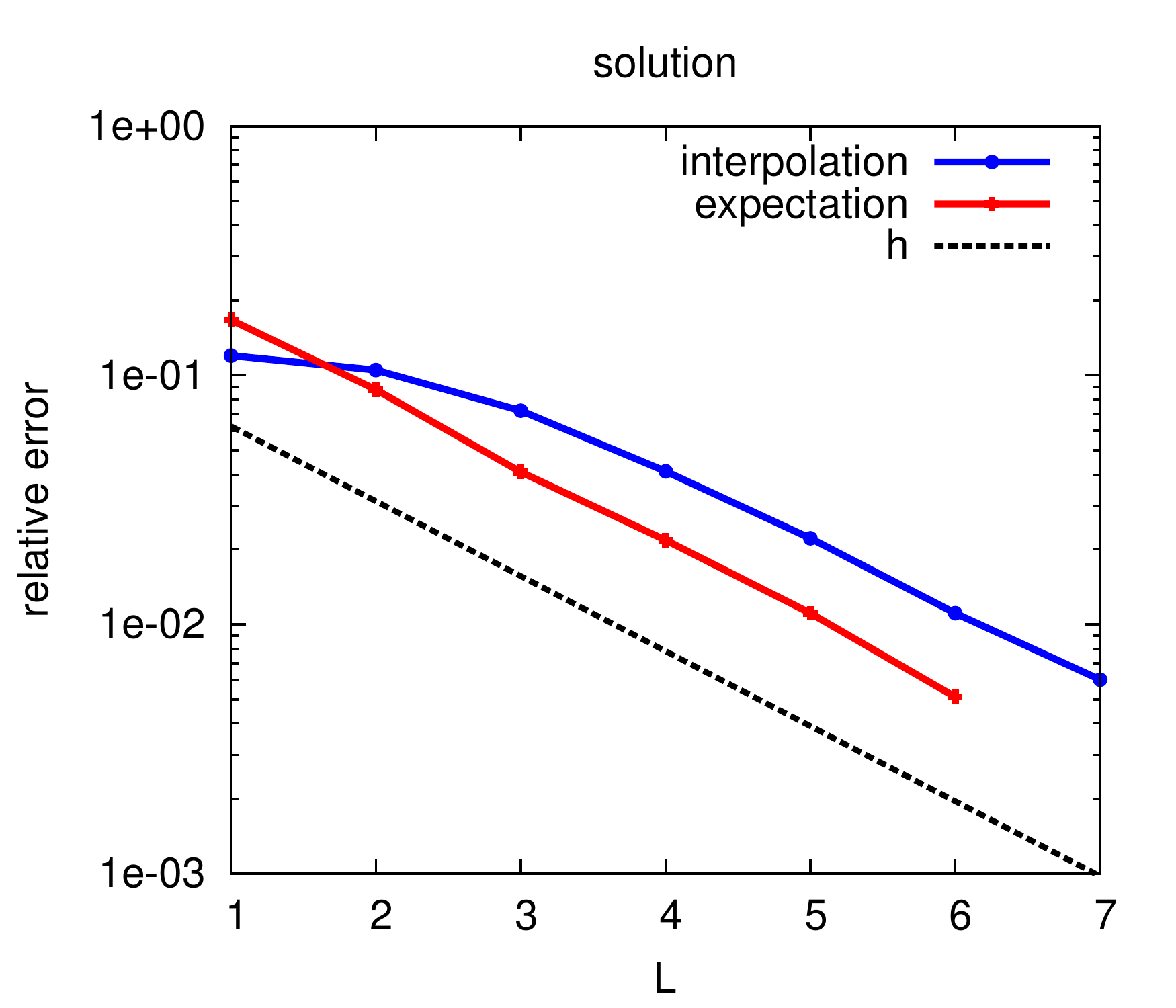}}
\parbox{0.49\linewidth}{\centering\includegraphics[scale=0.4]{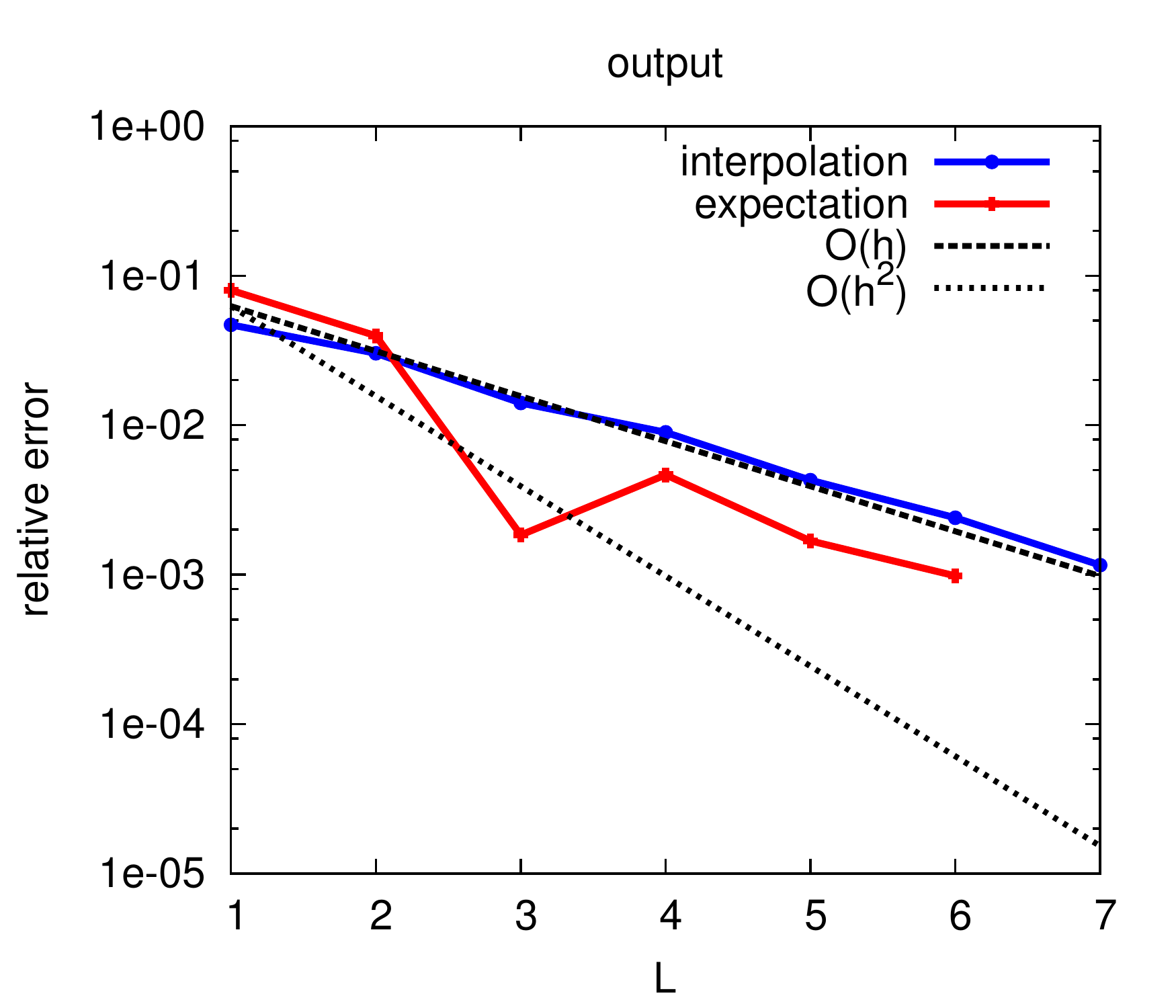}}
\caption{Log-uniform case for $N=10$.  Left: errors $\varepsilon^\mathrm{ML}_L [u]$ and $\varepsilon^\mathbb{E}_L [u]$ for the solution. 
Right: errors $\varepsilon^\mathrm{ML}_L [\psi]$ and $\varepsilon^\mathbb{E}_L [\psi]$ for the output functional.}
\label{fig:loguniform_error}
\end{figure}

\section{Conclusions}
In this article, we have considered the multilevel tensor approximation
for elliptic partial differential equations with a random diffusion coefficient.
By combining the multilevel idea for the approximation in the random parameter,
which has firstly been introduced in the context of multilevel Monte Carlo methods,
with a hierarchical tensor product approximation, we provide an efficient means
to directly represent the solution in a data sparse format. 
This representation can directly be employed for the evaluation of various functionals
of the solution without the necessity of performing additional costly computations.
In contrast to previous works, we do not 
rely on an a priori sparsified representation based on polynomials, but adaptively
compute a data sparse representation of the solution with the aid of the hierarchical
tensor format and the cross approximation. The numerical results confirm
the effectiveness of the presented method.

\bibliographystyle{plain}
\bibliography{literature}

\begin{thebibliography}{10}

\bibitem{BabNobTem2010}
I.~Babu{$\check{\mathrm{s}}$}ka, F.~Nobile, and R.~Tempone.
\newblock A stochastic collocation method for elliptic partial differential
  equations with random input data.
\newblock {\em SIAM Rev.}, 52(2):317--355, 2010.

\bibitem{BaGr2014}
J.~Ballani and L.~Grasedyck.
\newblock {Hierarchical tensor approximation of output quantities of
  parameter-dependent PDEs}.
\newblock {\em SIAM/ASA J. Uncertain. Quantif.}, 3(1):393--416, 2015.

\bibitem{BalGrasKlug2010}
J.~Ballani, L.~Grasedyck, and M.~Kluge.
\newblock Black box approximation of tensors in hierarchical {T}ucker format.
\newblock {\em Linear Algebra Appl.}, 438(2):639--657, 2013.

\bibitem{BaKr2016}
J.~Ballani and D.~Kressner.
\newblock {Reduced basis methods: from low-rank matrices to low-rank tensors}.
\newblock {\em SIAM J. Sci Comput.}, To appear.

\bibitem{BaHaKa2007}
W.~Bangerth, R.~Hartmann, and G.~Kanschat.
\newblock {deal.II} -- a general purpose object oriented finite element
  library.
\newblock {\em ACM Trans. Math. Softw.}, 33(4):24/1--24/27, 2007.

\bibitem{BSZ}
A.~Barth, C.~Schwab, and N.~Zollinger.
\newblock Multi-level {M}onte {C}arlo finite element method for elliptic {PDEs}
  with stochastic coefficients.
\newblock {\em Numer. Math.}, 119(1):123--161, 2011.

\bibitem{BG}
H.-J. Bungartz and M.~Griebel.
\newblock Sparse grids.
\newblock {\em Acta Numer.}, 13:147--269, 2004.

\bibitem{Cha12}
J.~Charrier.
\newblock Strong and weak error estimates for elliptic partial differential
  equations with random coefficients.
\newblock {\em SIAM J. Numer. Anal.}, 50(1):216--246, 2012.

\bibitem{CDS}
A.~Cohen, R.~DeVore, and C.~Schwab.
\newblock Convergence rates of best {$N$}-term {G}alerkin approximations for a
  class of elliptic s{PDE}s.
\newblock {\em Found. Comput. Math.}, 10:615--646, 2010.

\bibitem{DolKLM2015}
S.~Dolgov, B.~N. Khoromskij, A.~Litvinenko, and H.~G. Matthies.
\newblock Polynomial chaos expansion of random coefficients and the solution of
  stochastic partial differential equations in the tensor train format.
\newblock {\em SIAM/ASA J. Uncertain. Quantif.}, 3(1):1109--1135, 2015.

\bibitem{DooI2009}
A.~Doostan and G.~Iaccarino.
\newblock A least-squares approximation of partial differential equations with
  high-dimensional random inputs.
\newblock {\em J. Comput. Phys.}, 228(12):4332--4345, 2009.

\bibitem{EigPS2015}
M.~Eigel, M.~Pfeffer, and R.~Schneider.
\newblock Adaptive stochastic {G}alerkin {FEM} with hierarchical tensor
  representations.
\newblock {Technical report 2015/29, TU Berlin}, 2015.

\bibitem{EspHackLitMatWae2012}
M.~Espig, W.~Hackbusch, A.~Litvinenko, H.~G. Matthies, and P.~W\"ahnert.
\newblock Efficient low-rank approximation of the stochastic {G}alerkin matrix
  in tensor formats.
\newblock {\em Comput. Math. Appl.}, in press, 2012.

\bibitem{EspHackLitMatZan2013}
M.~Espig, W.~Hackbusch, A.~Litvinenko, H.~G. Matthies, and E.~Zander.
\newblock Efficient analysis of high dimensional data in tensor formats.
\newblock In {\em Sparse Grids and Applications}, volume~88 of {\em Lecture
  Notes in Computational Science and Engineering}, pages 31--56. Springer,
  Berlin-Heidelberg, 2013.

\bibitem{Fej33}
L.~Fej{\'e}r.
\newblock Mechanische {Q}uadraturen mit positiven {C}otesschen {Z}ahlen.
\newblock {\em Math. Z.}, 37(1):287--309, 1933.

\bibitem{G}
M.~Giles.
\newblock Multilevel {M}onte {C}arlo path simulation.
\newblock {\em Oper. Res.}, 56(3):607--617, 2008.

\bibitem{GW}
M.~Giles and B.~Waterhouse.
\newblock Multilevel quasi-{M}onte {C}arlo path simulation.
\newblock {\em Radon Series Comp. Appl. Math.}, 8:1--18, 2009.

\bibitem{Gras10}
L.~Grasedyck.
\newblock Hierarchical singular value decomposition of tensors.
\newblock {\em SIAM J. Matrix Anal. Appl.}, 31:2029--2054, 2010.

\bibitem{GrasKresTob13}
L.~Grasedyck, D.~Kressner, and C.~Tobler.
\newblock A literature survey of low-rank tensor approximation techniques.
\newblock {\em GAMM-Mitt.}, 36(1):53--78, 2013.

\bibitem{GriHarPet2015}
M.~{Griebel}, H.~{Harbrecht}, and M.~{Peters}.
\newblock Multilevel quadrature for elliptic parametric partial differential
  equations on non-nested meshes.
\newblock {\em arXiv:1509.09058}, 2015.

\bibitem{Ha2012}
W.~Hackbusch.
\newblock {\em Tensor Spaces and Numerical Tensor Calculus}.
\newblock Springer, Berlin, 2012.

\bibitem{HB02}
W.~Hackbusch and S.~B{\"o}rm.
\newblock $\mathcal{H}^2$-matrix approximation of integral operators by
  interpolation.
\newblock {\em Appl. Numer. Math.}, 43(1-2):129--143, 2002.

\bibitem{HackKuehn09}
W.~Hackbusch and S.~K\"uhn.
\newblock A new scheme for the tensor representation.
\newblock {\em J. Fourier Anal. Appl.}, 15(5):706--722, 2009.

\bibitem{ANTT}
A.L. Haji~Ali, F.~Nobile, L.~Tamellini, and R.~Tempone.
\newblock Multi-index stochastic collocation for random pdes.
\newblock {\em arXiv preprint arXiv:1508.07467}, 2015.

\bibitem{HPS1}
H.~Harbrecht, M.~Peters, and M.~Siebenmorgen.
\newblock On multilevel quadrature for elliptic stochastic partial differential
  equations.
\newblock In J.~Garcke and M.~Griebel, editors, {\em Sparse Grids and
  Applications}, volume~88 of {\em Lecture Notes in Computational Science and
  Engineering}, pages 161--179. Springer, Berlin-Heidelberg, 2012.

\bibitem{HPS2}
H.~Harbrecht, M.~Peters, and M.~Siebenmorgen.
\newblock Multilevel accelerated quadrature for {PDE}s with log-normally
  distributed diffusion coefficient.
\newblock {\em SIAM/ASA J. Uncertain. Quantif.}, 4(1):520--551, 2016.

\bibitem{H}
S.~Heinrich.
\newblock The multilevel method of dependent tests.
\newblock In {\em Advances in stochastic simulation methods ({S}t.
  {P}etersburg, 1998)}, Stat. Ind. Technol., pages 47--61. Birkh\"auser,
  Boston, 2000.

\bibitem{H2}
S.~Heinrich.
\newblock Multilevel {M}onte {C}arlo methods.
\newblock In {\em Lecture Notes in Large Scale Scientific Computing}, pages
  58--67, London, 2001. Springer.

\bibitem{HP57}
E.~Hille and R.~S. Phillips.
\newblock {\em Functional Analysis and Semi-Groups}, volume~31.
\newblock American Mathematical Society, Providence, 1957.

\bibitem{HS11}
V.~A. Hoang and C.~Schwab.
\newblock {N}-term {W}iener chaos approximation rate for elliptic {PDE}s with
  lognormal {G}aussian random inputs.
\newblock {\em Math. Models Methods Appl. Sci.}, 4(24):797--826, 2014.

\bibitem{KhoOsel2010}
B.~N. Khoromskij and I.~Oseledets.
\newblock Quantics-{TT} collocation approximation of parameter-dependent and
  stochastic elliptic {PDEs}.
\newblock {\em Comp. Meth. in Applied Math.}, 10(4):376--394, 2010.

\bibitem{KhoSchwa2011}
B.~N. Khoromskij and C.~Schwab.
\newblock Tensor-structured {G}alerkin approximation of parametric and
  stochastic elliptic {PDE}s.
\newblock {\em SIAM J. Sci. Comput.}, 33(1):364--385, 2011.

\bibitem{KresTob2011}
D.~Kressner and C.~Tobler.
\newblock {Low-rank tensor Krylov subspace methods for parameterized linear
  systems}.
\newblock {\em SIAM J. Matrix Anal. Appl.}, 32(4):1288--1316, 2011.

\bibitem{Kuo2015}
Frances~Y. Kuo, Christoph Schwab, and Ian~H. Sloan.
\newblock Multi-level quasi-{M}onte {C}arlo finite element methods for a class
  of elliptic {PDE}s with random coefficients.
\newblock {\em Found. Comput. Math.}, 15(2):411--449, 2015.

\bibitem{LeeE2016}
K.~Lee and H.~C Elman.
\newblock A preconditioned low-rank projection method with a rank-reduction
  scheme for stochastic partial differential equations.
\newblock {arXiv:1605.05297}, 2016.

\bibitem{MatZan2012}
H.~G. Matthies and E.~Zander.
\newblock Solving stochastic systems with low-rank tensor compression.
\newblock {\em Linear Algebra Appl.}, 436(10):3819--3838, 2012.

\bibitem{Nouy2015}
A.~Nouy.
\newblock Low-rank methods for high-dimensional approximation and model order
  reduction.
\newblock {\em arXiv preprint arXiv:1511.01554}, 2015.

\bibitem{Nouy2015b}
A.~Nouy.
\newblock Low-rank tensor methods for model order reduction.
\newblock {\em arXiv preprint arXiv:1511.01555}, 2015.

\bibitem{Oseledets2010}
I.~V. Oseledets and E.~E. Tyrtyshnikov.
\newblock {TT}-cross approximation for multidimensional arrays.
\newblock {\em Linear Algebra Appl.}, 432(1):70--88, 2010.

\bibitem{Riv74}
T.~J. Rivlin.
\newblock {\em The Chebyshev Polynomials}.
\newblock Wiley, Chichester, 1974.

\bibitem{Savostyanov2011}
D.~V. Savostyanov and I.~V. Oseledets.
\newblock Fast adaptive interpolation of multi-dimensional arrays in tensor
  train format.
\newblock In {\em Proceedings of 7th International Workshop on Multidimensional
  Systems (nDS)}. IEEE, 2011.

\bibitem{SG}
C.~Schwab and C.~J. Gittelson.
\newblock Sparse tensor discretizations of high-dimensional parametric and
  stochastic {PDE}s.
\newblock {\em Acta Numer.}, 20:291--467, 2011.

\bibitem{ST06}
C.~Schwab and R.~Todor.
\newblock Karhunen-{L}o\`eve approximation of random fields by generalized fast
  multipole methods.
\newblock {\em J. Comput. Phys.}, 217:100--122, 2006.

\bibitem{TJWG}
A.~L. Teckentrup, P.~Jantsch, C.~G. Webster, and M.~Gunzburger.
\newblock A multilevel stochastic collocation method for partial differential
  equations with random input data.
\newblock {\em SIAM/ASA J. Uncertain. Quantif.}, 3(1):1046--1074, 2015.

\bibitem{Tob2012}
C.~Tobler.
\newblock {\em Low-rank Tensor Methods for Linear Systems and Eigenvalue
  Problems}.
\newblock PhD thesis, {ETH Z{\"u}rich}, 2012.

\end{thebibliography}

\end{document}